\definecolor{Gray}{gray}{0.9}                            
\theoremstyle{definition}
\newtheorem{definition}{Definition}[section]
\newtheorem{remark}[definition]{Remark}
\theoremstyle{plain}
\newtheorem{theorem}[definition]{Theorem}
\newtheorem{corollary}[definition]{Corollary}
\newtheorem{lemma}[definition]{Lemma}
\newtheorem{proposition}[definition]{Proposition}
\newtheorem*{claim*}{Claim}
\theoremstyle{remark}
\newtheorem{example}[definition]{Example}
\newcommand{\OO}{{\mathcal{O}}}
\newcommand{\CC}{{\mathbb{C}}}      
\newcommand{\PP}{{\mathbb{P}}}          
\newcommand{\cF}{{\mathcal{F}}}
\newcommand{\cP}{{\mathcal{P}}}
\newcommand{\cM}{{\mathcal{M}}}
\newcommand{\cW}{{\mathcal{W}}}
\newcommand{\cH}{{\mathcal{H}}}
\newcommand{\cI}{{\mathcal{I}}}
\newcommand{\cE}{{\mathcal{E}}}
\newcommand{\T}{{\mathcal{T}}}
\newcommand{\hh}{{\mathfrak{h}}}
\newcommand{\fL}{{\mathfrak{L}}}
\newcommand{\fM}{{\mathfrak{M}}}
\newcommand{\fN}{{\mathfrak{N}}}
\newcommand{\fU}{{\mathfrak{U}}}
\providecommand{\rk}{\mathop{\rm rk}} 
\providecommand{\Pic}{\mathop{\rm Pic}} 
\def\cork{\operatorname{cork}}
\numberwithin{equation}{section}
\title[K3 surfaces with many elliptic pencils]{{Brill--Noether general K3 surfaces with the maximal number of elliptic pencils of minimal degree}}
\author[M. Hoff]{Michael Hoff} 
\address{Universit\"at des Saarlandes, Campus E2 4, D-66123 Saarbr\"ucken, Germany}
\email{\href{mailto:hahn@math.uni-sb.de}{hahn@math.uni-sb.de}} 
\author[ A. L.~Knutsen]{Andreas Leopold Knutsen} 
\address{University of Bergen, Department of Mathematics, N-5020 Bergen, Norway}
\email{\href{mailto:andreas.knutsen@math.uib.no}{andreas.knutsen@math.uib.no}} 
\date{\today} 
\begin{document}

\keywords{K3 surfaces, Unirationality, Moduli map, Lazarsfeld--Mukai bundle}
\subjclass[2010]{14J28, 
 51M15, 
 (14Q10, 
 14J10)  
 }

\begin{abstract}
We explicitly construct Brill--Noether general $K3$ surfaces of genus $4,6$ and $8$ having the maximal number of elliptic pencils of degrees $3, 4$ and $5$, respectively, and study their moduli spaces and moduli maps to the moduli space of curves. 
As an application we prove the existence of Brill--Noether general $K3$ surfaces of genus $4$ and $6$ without stable Lazarsfeld--Mukai bundles of minimal $c_2$.
\end{abstract}

\maketitle

\section{Introduction}

It is well known that a general curve of genus $g\le 9$ or $g=11$ can be realized as a linear section of a primitively polarized $K3$ surface, cf. \cite{Muk88,Mukg11}. Since for even $g$ a general curve $C$ carries a finite number of pencils of minimal degree $\frac{g}{2}+1$, it is natural to ask whether one can simultaneously extend $C$ and all or some of these pencils to some $K3$ surfaces for $g=4,6,8$. 
This question is connected to the existence of non-stable Lazarsfeld--Mukai bundles. Indeed, the Lazarsfeld--Mukai bundle associated to  a pencil on a smooth curve on the $K3$ surface induced by an elliptic pencil on the surface is necessarily not stable, cf. Lemma \ref{LMnotstable}. 

Using vector bundle methods, Mukai \cite{Muk02} showed that the projective model of any \emph{Brill--Noether general} $K3$ surface $(S,L)$ is obtained as sections of homogeneous varieties for $g \in \{6,\dots, 10, 12\}$. By definition, cf. \cite[Def. 3.8]{Muk02}, a polarized $K3$ surface $(S,L)$ of genus $g$ is Brill--Noether general if $h^0(M) h^0(N) < g + 1 = h^0(L)$ for any non-trivial decomposition $L \sim M+N$. In these low genera this is equivalent to all the smooth curves in the linear system $|L|$ being Brill--Noether general, due to techniques in \cite{L86, GL87} (see \cite[Lemma 1.7]{GLT15}). 
Using Mukai's results, we will study projective models of Brill--Noether general $K3$ surfaces of genus $g\in \{4,6,8\}$ containing the maximal possible number of elliptic pencils of degree $\frac{g}{2} + 1$. 

The goal of our paper is threefold: 
\begin{enumerate}
 \item We provide explicit constructions/equations of $K3$ surfaces with special geometric features. 
 \item We describe their moduli spaces as lattice polarized $K3$ surfaces and the corresponding moduli map to the moduli space of curves of genus $g$. 
 \item We study the slope-stability of Lazarsfeld--Mukai bundles of hyperplane sections on such $K3$ surfaces.
\end{enumerate}

Our main results are the following. 
\\
$\bullet$ \S \ref{genus4}: We prove that a general curve $C$ of genus $4$ is a linear section of a smooth $K3$ surface $S$ such that its two $g^1_3$s (which are well-known to be auto-residual) are induced by two elliptic pencils $|E_1|$ and $|E_2|$ on $S$ satisfying $C\sim E_1+ E_2$, cf. Proposition \ref{dybala4}. Furthermore, the moduli space parametrizing such $K3$ surfaces is unirational (and $18$-dimensional), cf. Proposition \ref{higuain4}. We believe that these results should be known, but could not find any reference. 
\\
$\bullet$ \S \ref{genus6}:
A general curve $C$ of genus $6$ carries precisely five pencils $|A_1|,\dots, |A_5|$ of minimal degree $4$ which satisfy $2K_C\sim A_1+ \dots + A_5$ (see \cite[p. 209ff]{ACGH}). We prove that $C$ is a linear section of a smooth $K3$ surface $S$ such that its five $g^1_4$s are induced by five elliptic pencils $|E_1|,\dots,|E_5|$ on $S$ satisfying $2 C \sim E_1+\dots+E_5$, cf Theorem \ref{ModuliMapDominant}(a).  We prove that the moduli space parametrizing such pairs $(S,C)$ is unirational, cf. Theorem \ref{unirationality}(b). The moduli space of the underlying $K3$ surfaces was already studied in \cite{AK11} where it was shown to be birational to the moduli space $\cM_6$ of curves of genus $6$ (whence rational, cf. \cite{SB89}). Our approach shows that this moduli space is exactly the locus of Brill--Noether general $K3$ surfaces that cannot be realized as quadratic sections of a smooth quintic Del Pezzo threefold (but as quadratic sections of a cone over a smooth quintic Del Pezzo surface), cf. Remark \ref{totti6}(b). 
\\
$\bullet$ \S \ref{genus8}:
A general curve $C$ of genus $8$ carries precisely $14$ pencils of degree $5$. An easy lattice computation shows that at most $9$ can be extended to a $K3$ surface containing $C$. We prove that this bound is reached in codimension $3$ in the moduli space $\cM_8$, and for a general curve only six out of its $14$ pencils can be extended to elliptic pencils on a $K3$ surface, cf. Corollary \ref{dybala8}. 
 We prove that the moduli spaces of such $K3$ surfaces containing $i$ elliptic pencils are unirational for $1\le i\le 6$ and $i=9$, cf. Theorems \ref{thm:uniN9} and \ref{thm:uniN6}. 
\\
$\bullet$ \S \ref{LMbundles}: The $K3$ surfaces constructed in Section \ref{genus4} (respectively \ref{genus6}) provide examples of $K3$ surfaces without stable (resp. semistable) Lazarsfeld--Mukai bundles with $c_2 = 3$ (resp. $4$), cf. Corollary \ref{voeller4} (resp. \ref{voeller6}). This shows in particular the sharpness of a result  of Lelli-Chiesa \cite[Thm. 4.3]{LC13},  cf. Remark \ref{RemarkLC}.

\subsection*{Notation and conventions} 

We work over $\mathbb{C}$. We will denote $V_n$ an $n$-dimensional vector space and $G(k,V_n)$ (respectively $G(V_n,k)$) the Grassmannian of $k$-dimensional sub- (resp. quotient-) spaces of $V_n$. The projective space of one-dimensional sub- (resp. quotient-) spaces is denoted $\PP_*(V_n)$ (resp. $\PP^*(V_n))$.

\subsection*{Acknowledgements}

The authors benefitted from conversations with Christian Bopp and Frank-Olaf Schreyer and acknowledge support from grant n. 261756 of the Research Council of Norway. 

\section{Lattice polarized $K3$ surfaces and their moduli spaces}

Let $\hh$ be a lattice. The moduli space $\cF^\hh$ of $\hh$-polarized $K3$ surfaces parametrizes pairs $(S,\varphi)$ (up to isomorphism) consisting of a $K3$ surface $S$ and a primitive lattice embedding $\varphi: \hh \to \Pic(S)$ such that  $\varphi(\hh)$ contains an ample class. It is a quasi-projective irreducible $(20 - \rk(\hh))$-dimensional variety by \cite{Do96}.

If $(S,\varphi)\in \cF^\hh$ is an $\hh$-polarized $K3$ surface and $L\in \hh \cong\varphi(\hh)$ is a distinguished class with $L^2=2g-2\ge 2$, 
one may consider the open subset
$$
\cF^\hh_g=\left\{ (S,\varphi)\ \big| \  (S,\varphi) \in \cF^\hh \text{ and } L \text{ ample } \right\}
$$
of the moduli space $\cF^\hh$, which may also be considered as a subset of the moduli space $\cF_g$ of polarized $K3$ surfaces of genus $g$.  
Furthermore, let  
$\cP^\hh_g$
denote the moduli space of triples $(S,\varphi,C)$ where $C\in |L|$ is a smooth irreducible curve in the distinguished linear system.
Then we have moduli maps
$$
m_g:\cP^\hh_g \to \cM_{g}.
$$
 Since in our cases of study  it will be clear what the distinguished class $L$ will be, we will often skip the index $g$ in $\cF_g^\hh$ and $\cP_g^\hh$. 

\section{$K3$ surfaces of genus $4$}\label{genus4}

We will show the unirationality of the moduli space $\cF^{\fU(3)}$ of lattice polarized $K3$ surfaces where $\fU$ is the hyperbolic lattice of rank $2$. We believe that this result should be well-known, but we could not find any reference. 

The following example is well-known, but we include it for the sake of the reader and it serves as an introduction for our next results and constructions.  

\begin{example}[The moduli space of $K3$ surfaces of genus $4$] \label{exK3g4}
A smooth polarized $K3$ surface $S\subset \PP^4$ of genus $4$ is the complete intersection of a quadric $Q$ and a cubic hypersurface $Y$ in $\PP^4$. The quadric $Q=V(q)$ and the cubic $Y=V(y)$ are given by polynomials $q \in H^0(\PP^4, \OO_{\PP^4}(2))$ and $y \in H^0(\PP^4, \OO_{\PP^4}(3))$ of degrees $2$ and $3$, respectively. 
 
The moduli space $\cF_4$ of $K3$ surfaces of genus $4$ is described as follows. The quadric has to be of rank at least $4$ since otherwise $S$ will be singular. Let $V\subset H^0(\PP^4, \OO_{\PP^4}(2))$ be the open subset consisting of quadratic equations of rank $\geq 4$. For a chosen equation $q$ we need to pick a cubic $y$ such that $y$ is no multiple of $q$, and the intersection of $Q$ and $Y$ should be smooth. Let $V_q$ be the five-codimensional quotient of $H^0(\PP^4, \OO_{\PP^4}(3))$ parametrizing non-multiples of $q$. The desired cubic equations are parametrized by an open subset $W_q\subset V_q$. Let $W$ be the iterated Grassmannian 
$$
\xymatrix{
W \ar[rr]^{G(1,W_q) \ \ \ \ } & & \PP_*(V) \cong \PP^{14}
}
$$
whose fibers are Grassmannians of one-dimensional subspaces of $W_q$.  Then $\cF_4$ is birational to $W$ modulo the automorphism group of $\PP^4$, whence  $\cF_4$ is unirational.  Note further that a dimension count yields 
$$\dim V + \dim W_q - \dim PGL(5) = \left(\binom{6}{2}-1\right) + \left(\binom{7}{3} - 1 - 5\right) - (5^2 -1) = 19,$$ 
as expected. 

\end{example}

\subsection{K3 surfaces of genus 4 with an elliptic pencil of degree 3} \label{exK3g4k3} 
With notation as in the previous example let $S\subset \PP^4$ be a smooth $K3$ surface of genus $4$ with polarization $L=\OO_S(1)$.  Assume   that there exists a class $E\in \Pic(S)$ such that $E^2=0$ and $E.L=3$. By Riemann--Roch, $h^0(S,E)=2$ and $E'$ is a smooth elliptic normal curve for  general $E'\in |E|$.  Hence we get a pencil of elliptic normal curves. The pencil induces a rational normal scroll 
$$
X=\bigcup_{E'\in |E|} \overline{E'} \subset \PP^4
$$
of dimension $3$ and degree $2$ where $\overline{E'}=\PP^2$ is the linear span of $E'$. Thus the scroll $X$ is the unique quadric hypersurface containing $S$. Furthermore,  the scroll $X$ is singular in a point (since any two different projective planes in $\PP^4$ intersect and $X$ cannot be singular along a line), that is, $X$ is a rank $4$ quadric. 
 
We remark that the residual class $L-E$ is a second elliptic pencil of degree $3$ on $S$ and the maximal number of such pencils is two since $S\subset \PP^4$ is generated by a unique quadric. We get a $K3$ surface whose Picard lattice contains the intersection matrix with respect to the ordered basis $\{L,E\}$ (respectively $\{L-E,E\}$)
$$
\begin{pmatrix}
 6 & 3 \\ 
 3 & 0 
\end{pmatrix} 
\left(\mbox{resp. }
\begin{pmatrix}
 0 & 3 \\ 
 3 & 0 
\end{pmatrix}
= \fU(3)\right)
$$
where $\fU$ is the hyperbolic lattice of rank $2$ and $L$ is the sum of the two basis elements of square $0$. In general $\Pic (S)\cong \fU(3)$ (such $K3$ surfaces exist by
\cite[Thm. 2.9(i)]{Mor84} or \cite{Nik80}),
in which case $L$ is the unique element (up to sign) of square $6$, whence genus $4$, which is easily seen to be very ample by the classical results of Saint-Donat \cite{SD}. Furthermore, such a $K3$ surface $(S,L)$ is Brill--Noether general. 

Recall from the introduction that $\cF^{\fU(3)}$ is the moduli space of $\fU(3)$-polarized $K3$ surfaces. 

\begin{proposition}\label{higuain4}
 The moduli space $\cF^{\fU(3)}$ is unirational.  
\end{proposition}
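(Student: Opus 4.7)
The plan is to mimic the unirationality argument of Example~\ref{exK3g4}, but restricted to the locus of rank-four quadrics in $\PP^4$, which by Subsection~\ref{exK3g4k3} is exactly the locus producing $\fU(3)$-polarized $K3$ surfaces rather than generic genus-four ones. The first task is therefore to show that the locus $\Delta \subset \PP_*(H^0(\PP^4, \OO_{\PP^4}(2))) \cong \PP^{14}$ of quadrics of rank at most four is rational. For this I would introduce the incidence variety
$$
\cH = \{ (p, [q]) \in \PP^4 \times \PP^{14} \mid p \in \operatorname{Sing}(V(q)) \}.
$$
The projection $\cH \to \PP^4$ is a $\PP^9$-bundle, since in coordinates adapted to a fixed singular point the condition is that $q$ does not involve the corresponding coordinate; hence $\cH$ is rational of dimension $13$. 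The projection $\cH \to \Delta$ is birational because a rank-four quadric has a unique singular point, so $\Delta$ is rational as well.

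Next, over the open subvariety $\Delta^\circ \subset \Delta$ of quadrics of rank exactly four, I would form the iterated Grassmannian as in Example~\ref{exK3g4}, with fiber $\PP_*(W_q)$ over $[q]$, where $W_q = H^0(\PP^4, \OO_{\PP^4}(3))/(q \cdot H^0(\PP^4, \OO_{\PP^4}(1)))$ has dimension $30$. The open locus $\cW^\circ$ in this bundle parameterizing pairs $([q],[y])$ with $V(q) \cap V(y)$ smooth is rational of dimension $13 + 29 = 42$. By Subsection~\ref{exK3g4k3}, the $K3$ surface $S = V(q) \cap V(y)$ attached to such a pair carries a sublattice $\fU(3) \subset \Pic(S)$ spanned by the two pencils induced by the two families of planes on the scroll $V(q)$, so we obtain a rational moduli map $\mu \colon \cW^\circ \dashrightarrow \cF^{\fU(3)}$. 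The general fibre of $\mu$ is a $PGL(5)$-orbit, since for a generic $(S,L) \in \cF^{\fU(3)}$ the quadric $V(q)$ is the \emph{unique} one containing $S$ (via the count $h^0(\PP^4, \OO_{\PP^4}(2)) - h^0(S, \OO_S(2)) = 15 - 14 = 1$) and $[y]$ is determined modulo $q \cdot H^0(\OO_{\PP^4}(1))$. The dimension count $42 - \dim PGL(5) = 42 - 24 = 18 = 20 - \rk \fU(3)$, combined with the irreducibility of $\cF^{\fU(3)}$, yields dominance of $\mu$; unirationality of $\cF^{\fU(3)}$ then follows from rationality of $\cW^\circ$.

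The only substantive step compared with Example~\ref{exK3g4} is the rationality of the rank-four quadric hypersurface $\Delta$; once the incidence-variety trick handles this, the bundle construction and the moduli map go through as in the generic case. The remaining point to verify is that the general $\fU(3)$-polarized pair $(S, L)$ really arises this way — that is, that $L$ is very ample and that the unique quadric through $S$ has rank exactly four — both of which are already recorded in Subsection~\ref{exK3g4k3}.
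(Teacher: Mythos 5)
Your argument is correct and essentially identical to the paper's: both reduce the problem to the rationality of the rank-four quadric locus (your incidence variety over the singular point is the same device as the paper's description of that locus as cones over smooth quadrics in $\PP^3$, i.e.\ an open subset of a $\PP^4$-bundle over $\PP H^0(\PP^3,\OO_{\PP^3}(2))$), and then both form the iterated bundle of cubics modulo the ideal of $q$ and quotient by $PGL(5)$. Your added verification of dominance via the uniqueness of the quadric through $S$ and the dimension count $42-24=18$ is exactly the content the paper records in its opening sentence and parenthetical remark.
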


\begin{proof}
By what we said, a general element in $\cF^{\fU(3)}$ comes equipped with a unique embedding into $\PP^4$ (up to the action of the projective linear group), as a complete intersection of a cubic and a rank $4$ quadric, singular in a point. The converse holds true: if a smooth surface $S\subset \PP^4$ is a complete intersection of a rank $4$ quadric hypersurface $Q$ and a cubic hypersurface, then the two rulings on $Q$ cut out two residual elliptic pencils of degree $3$ on $S$. 
 
We describe a birational model of the moduli space $\cF^{\fU(3)}$ by modifying the construction in Example \ref{exK3g4}, keeping the notation therein. 

Let $V'\subset H^0(\PP^4, \OO_{\PP^4}(2))$ be the subset of quadratic equations of rank $4$. Since a rank $4$ quadric is a cone over a smooth quadric in $\PP^3$, the space $V'$ is isomorphic to an open subset of a $\PP^4$-bundle over $\PP H^0(\PP^3, \OO_{\PP^3}(2))$ and is therefore unirational. Pick $q\in V'$. Then the moduli space $\cF^{\fU(3)}$ is birational to the iterated Grassmannian 
$$
\xymatrix{
W' \ar[rr]^{G(1,W_q) \ \ } & & V'
}
$$ modulo automorphisms and is therefore unirational, too. (Since $\dim V' = \binom{5}{2}-1+4 = 13$, a dimension count yields that $\cF^{\fU(3)}$ is a codimension one subspace of $\cF_4$, as expected.)
\end{proof}

\begin{remark}
Let $\fU$ be the hyperbolic lattice of rank $2$. Even if the example above should be classically known, we only found in the literature unirationality results of $\cF^{\fU(n)}$ for $n=1$ and $2$  (cf. \cite{BHK16}). Elliptic surfaces are parametrized by $\cF^{\fU}$ and double covers of $\PP^1\times \PP^1$ branched along a curve of bidegree $(4,4)$ are parametrized by $\cF^{\fU(2)}$.
\end{remark}

Recall from the introduction that $\cP^{\fU(3)}$ is the moduli space of triples $(S,\varphi,C)$ where $(S,\varphi)\in \cF^{\fU(3)}$ and $C\in |L|$ is a smooth curve of genus $4$ in the distinguished linear system.
Also recall that a  general  curve of genus $4$ has exactly two distinct  $g^1_3$s, which are auto-residual. 

\begin{proposition}\label{dybala4}
  The moduli map $\cP^{\fU(3)}\to \cM_4$ is dominant. In particular, a general curve $C$ of genus $4$ is a linear section of a smooth $K3$ surface $S$ such that its two $g^1_3$s are induced by two elliptic pencils $|E_1|$ and $|E_2|$ on $S$ satisfying $C\sim E_1+ E_2$.
\end{proposition}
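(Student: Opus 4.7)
The plan is to invert the construction of Subsection~\ref{exK3g4k3}: starting from a general curve $C \in \cM_4$, I build a $K3$ surface $(S,L) \in \cF^{\fU(3)}$ in which $C$ is a hyperplane section, thereby placing $C$ in the image of $m_g$.

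Recall that a general $C \in \cM_4$ is canonically embedded in $\PP^3$ as a complete intersection $C = Q \cap Y$, where $Q$ is a smooth quadric (of rank $4$) and $Y$ is a cubic hypersurface, and the two $g^1_3$'s of $C$ are cut out by the two rulings of $Q$. I view $\PP^3$ as a hyperplane $H \subset \PP^4$, pick a general point $v \in \PP^4 \setminus H$, and let $\widetilde{Q} \subset \PP^4$ be the projective cone over $Q$ with vertex $v$, which is a rank $4$ quadric hypersurface in $\PP^4$. I then choose a general cubic hypersurface $\widetilde{Y} \subset \PP^4$ whose restriction to $H$ equals $Y$ (such extensions form a nonempty affine space, e.g. by adding $\ell \cdot \tilde{q}$ where $\ell$ cuts out $H$ and $\tilde{q}$ is an arbitrary quadratic form on $\PP^4$), and set
\[
S := \widetilde{Q} \cap \widetilde{Y}.
\]
By construction $S \cap H = Q \cap Y = C$, so $C$ is a hyperplane section of $S$.

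The main step is then to verify that for a generic choice of $(v, \widetilde{Y})$ the surface $S$ is smooth. Away from $C$ this follows from a Bertini-style argument on $\widetilde{Q}$: the only singular point of $\widetilde{Q}$ is the vertex $v$, which for general $\widetilde{Y}$ is not contained in $\widetilde{Y}$, while on the smooth locus of $\widetilde{Q}$ a general cubic section is smooth. The delicate point is smoothness along $C$, which I would handle by noting that our pairs $(\widetilde{Q},\widetilde{Y})$ are, up to the $\mathrm{PGL}(5)$-action, exactly the pairs produced in the proof of Proposition~\ref{higuain4} that birationally parametrize $\cF^{\fU(3)}$; hence the general member of the family is a smooth $K3$ surface, and smoothness holds on a Zariski-open locus in the parameter space of $(Q,Y,v,\widetilde{Y})$ provided this locus meets a general fiber of the forgetful map $(Q,Y,v,\widetilde{Y}) \mapsto (Q,Y)$, which is the key transversality assertion to check.

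Granted smoothness, Subsection~\ref{exK3g4k3} applies directly: the two rulings of $\widetilde{Q}$ sweep out two residual elliptic pencils $|E_1|, |E_2|$ on $S$ with $E_1 + E_2 \sim L$, $E_i^2 = 0$ and $E_i \cdot L = 3$, so $(S,L) \in \cF^{\fU(3)}$. Restricting the two rulings of $\widetilde{Q}$ to the hyperplane $H$ recovers the two rulings of $Q$, whose traces on $C$ are precisely its two $g^1_3$'s. Thus $(S,\varphi,C) \in \cP^{\fU(3)}$ satisfies $m_g(S,\varphi,C) = C$ with $E_i|_C$ realizing the $g^1_3$'s, and since this construction works for a general $C \in \cM_4$, the moduli map $m_g$ is dominant.
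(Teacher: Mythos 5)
Your proposal is correct and follows essentially the same route as the paper: canonically embed $C$ as $Q\cap Y\subset\PP^3$, extend $Q$ to a rank~$4$ cone in $\PP^4$ and extend $Y$ to an arbitrary cubic, then read off the two elliptic pencils from the rulings of the cone. The one step you flag as delicate and leave open --- smoothness of $S$ along $C$ --- is in fact automatic and needs no transversality check: at any $p\in C$ the differentials of $\widetilde{Q}$ and $\widetilde{Y}$ restrict on $T_pH$ to those of $Q$ and $Y$, which are already linearly independent because $C$ is a smooth complete intersection in $H$; combined with Bertini on the linear system of extensions (whose base locus on $\widetilde{Q}$ is exactly $C$) and the observation that a general $\widetilde{Y}$ misses the vertex, this gives smoothness of $S$ everywhere.
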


\begin{proof}
 We consider a general curve $C\subset \PP^3$ of genus $4$, canonically embedded into $\PP^3$, which is a complete intersection of a smooth quadric $Q'$ and a cubic $Y'$ (the quadric $Q'$ is smooth since the two $g^1_3$s are distinct). We will construct a $K3$ surface $S\in \cF^{\fU(3)}$ with the curve $C$ as a linear section. Therefore, we choose a $\PP^4$  containing the ambient space $\PP^3$ of the curve. Let $Q\subset \PP^4$ be a cone over the quadric $Q'\subset \PP^3$, that is, a rank $4$ quadric whose hyperplane section with the given $\PP^3$ is $Q'$. Let $Y\subset \PP^4$ be any cubic hypersurface such that $Y\cap \PP^3=Y'$. The surface $S\subset \PP^4$ can be chosen as the complete intersection of $Q$ and $Y$. Then, the pair $(S,C)$ is an element of $\cP^{\fU(3)}$ by construction, and the dominance of the moduli map follows. 
 The last statement is immediate.
\end{proof}

\begin{remark}
 Similarly in \cite{Kon02} it is shown that the moduli space of $K3$ surfaces admitting a special automorphism of order $3$ is birational to the moduli space of curves of genus $4$ (see also \cite{AS08} for its generalization). 
\end{remark}

\section{K3 surfaces of genus 6}\label{genus6}

Inspired by the seminal work of Mukai \cite{Muk93}, we will construct a Brill--Noether general $K3$ surface $S$ of genus $6$ where every complete pencil of degree $4$ on a hyperplane section of $S$ is induced by an elliptic pencil on $S$.  Furthermore, we show that the moduli space of such lattice polarized $K3$ surfaces is unirational.

We briefly recall Mukai's construction. Let $(S,L)$ be a Brill--Noether general $K3$ surface of genus $6$. There exists a unique stable (rigid) vector bundle $\cE$ of rank $2$ on $S$ with $c_1(\cE)=L$, $h^0(S,\cE)=5$ and $h^i(S,\cE)=0$ for $i=1,2$ \cite[Prop. 5.2.7]{IP99}. This bundle induces an embedding of $S$ into the Grassmannian $G(V_5,2)$, where $V_5 = H^0(S,\cE)$, by sending $s\in S$ to the fiber $\cE_s = \cE \otimes \OO_s$. 
As described in \cite{Muk93}, a Brill--Noether general $K3$ surface $S$ is the intersection of a linear section of codimension 3 (or 4) and a quadratic section of either the Pl\"ucker embedding $G(V_5,2)\subset \PP^9$ or of its cone $\widehat{G(V_5,2)}\subset \PP^{10}$, respectively. 
 
In order to get an elliptic pencil of degree $4$ on a $K3$ surface, we need special sections of the following form. If the linear section of codimension $3$ cuts a sub-Grassmannian of type $G(4,2)$ in a quadric surface, we get an elliptic normal curve of degree $4$ on $S$ as the intersection of this quadric surface with the quadric section. A pencil of Grassmannians of type $G(4,2)$ induces a pencil of elliptic curves on $S$ and can be controlled in the dual space in the following way.  

\begin{lemma}\label{schubertVarietyDualGrassmannian}
A hyperplane corresponds to a point in the dual Grassmannian $G(2,V_5) \subset{\PP^9}^\vee$ if and only if it cuts out a Schubert subvariety. Moreover, the Schubert variety is a one-dimensional union of Grassmannians of type $G(4,2)$ contained in $G(V_5,2)$. 
\end{lemma}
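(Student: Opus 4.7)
The plan is to identify the dual projective space $({\PP^9})^{\vee}$ with $\PP_*(\bigwedge^2 V_5)$ via the perfect pairing $\bigwedge^3 V_5 \otimes \bigwedge^2 V_5 \to \bigwedge^5 V_5 \cong \CC$. Thinking of $G(V_5,2)$ equivalently as $\{W\subset V_5 : \dim W = 3\}$ via kernels of quotient maps, with Plücker embedding $W \mapsto [\bigwedge^3 W] \in \PP_*(\bigwedge^3 V_5) = \PP^9$, each hyperplane $H \subset \PP^9$ then corresponds to a class $[\omega] \in \PP_*(\bigwedge^2 V_5)$, and $G(2,V_5) \subset ({\PP^9})^{\vee}$ sits naturally as the locus of decomposable classes $[\omega] = [v_1 \wedge v_2]$.

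For the forward implication, given a decomposable $\omega = v_1 \wedge v_2$ spanning the $2$-plane $U := \langle v_1, v_2\rangle \subset V_5$, a direct linear-algebra computation gives $\bigwedge^3 W \wedge \bigwedge^2 U = 0$ in $\bigwedge^5 V_5$ if and only if $W \cap U \neq 0$. Hence the hyperplane section $H \cap G(V_5,2)$ is exactly the Schubert divisor $\Sigma_U := \{W : \dim(W \cap U) \geq 1\}$. For the converse, I would combine two standard facts: every codimension-one Schubert cycle in $G(V_5,2)$ is of the form $\Sigma_U$ for some $2$-plane $U$ (the only partition fitting in a $3\times 2$ rectangle with a single box is $\lambda = (1)$), and the Plücker embedding is projectively normal, so a hyperplane is uniquely recovered from its trace on $G(V_5,2)$. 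Combining these, if $H \cap G(V_5,2) = \Sigma_U$ then $H$ must be the hyperplane defined by $\bigwedge^2 U$, whose class lies on $G(2,V_5)$.

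For the ``moreover'' part, I would decompose $\Sigma_U = \bigcup_{L \in \PP_*(U)} G_L$, where $G_L := \{W \in G(V_5,2) : L \subset W\} \cong G(V_5/L, 2)$ is a sub-Grassmannian of type $G(4,2)$ sitting inside $G(V_5,2)$. Since $\PP_*(U) \cong \PP^1$ is one-dimensional, this realises $\Sigma_U$ as a one-parameter family of such sub-Grassmannians, and the dimensions add correctly ($\dim G_L + \dim \PP_*(U) = 4 + 1 = 5 = \dim \Sigma_U$). The main obstacle will be the converse implication, which requires both the classification of codimension-one Schubert cycles and the projective normality of the Plücker embedding; the forward direction and the geometric decomposition of $\Sigma_U$ are essentially mechanical once the dualities are carefully set up.
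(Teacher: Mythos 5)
Your proposal is correct and follows essentially the same route as the paper, which proves the analogous statement for $G(V_6,2)$ in \S\ref{secDualGrassmannian} via the same perfect-pairing identification, the same computation that the hyperplane section is $\{W : \dim(W\cap U)\geq 1\}$, and the same decomposition into a pencil of sub-Grassmannians (leaving the $G(V_5,2)$ case and the converse to the reader). Your spelled-out converse (classification of codimension-one Schubert cycles plus non-degeneracy of the Pl\"ucker embedding) is a sound filling-in of the step the paper only asserts.
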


We will prove the same statement for the Grassmannian $G(V_6,2)$ in  the next section (cf. Lemma \ref{mukaione})  and leave this proof to the readers. Note that two Grassmannians of type $G(4,2)$ in $G(V_5,2)$ intersect in a $2$-plane. Hence, two elliptic curves of distinct pencils of degree $4$  with respect to  $L$ intersect in two points. 
This can also be seen in the following way: if $E_1$ and $E_2$ are such  elliptic curves, then $E_1.E_2\ge2$ (as each $|E_i|$ is a pencil); moreover,  since $(L-E_1)^2=2$,  one also has  $4 - E_1.E_2 = E_2.(L-E_1) \ge 2 $, whence  $E_1.E_2\le 2$. Also inspired by the previous example of $K3$ surfaces of genus $4$, we will construct a $K3$ surface with Picard lattice of the following form: 

$$
\begin{pmatrix}
 10 & 4 & 4 & \dots & 4 \\ 
 4 & 0 & 2 & \dots & 2 \\
 4 & 2 & 0  & \ddots & \vdots \\
 \vdots & \vdots & \ddots & \ddots & 2 \\
 4 & 2 & \dots & 2 & 0
\end{pmatrix}
$$

An easy computation shows that  the rank can be at most five (otherwise the matrix has at least two non-negative eigenvalues).  
Let $\fM$ be the lattice given by the following intersection matrix 

$$
\fM=
\begin{pmatrix}
 10 & 4 & 4 & 4 & 4 \\ 
 4 & 0 & 2 & 2 & 2 \\
 4 & 2 & 0 & 2 & 2 \\
 4 & 2 & 2 & 0 & 2 \\
 4 & 2 & 2 & 2 & 0
\end{pmatrix}.
$$

We denote $S$ a $K3$ surface with the above Picard lattice $\fM$ of rank $5$  (which exists by \cite[Thm. 2.9(i)]{Mor84} or \cite{Nik80})  and let $L$ be the basis element of square $10$. 
Let $E_i$, $i=1,\dots,4$,  be the generators of square zero.  Note that $E_5:= 2L-E_1-E_2-E_3-E_4$ is also an  element of square zero and degree $4$ with respect to $L$.

The lattice $\fM$ is also generated by elements $s_0,s_1, \dots, s_4$ where $s_0 = E_1+\cdots + E_4 - L$ and $s_i = s_0-E_i$, $i=1,\dots,4$, with intersection matrix

 $$
\begin{pmatrix}
 2 & 0 & 0 & 0 & 0 \\ 
 0 & -2 & 0 & 0 & 0 \\
 0 & 0 & -2 & 0 & 0 \\
 0 & 0 & 0 & -2 & 0 \\
 0 & 0 & 0 & 0 & -2
\end{pmatrix}.
$$

(This is the lattice considered in \cite{AK11}.)  We may assume that $s_0$ is big and nef by standard arguments (see  \cite[VIII, Prop. 3.10]{BHPV}). Note that $L = 3s_0 - \sum_{i=1}^4 s_i$, $E_i = s_0-s_i$ for $i=1,\dots,4$ and $E_5 = 6s_0 - 3 \sum_{i=1}^4 s_i$.

\begin{lemma}\label{cr6}
\begin{enumerate}
 \item [(a)] The class $L$ is ample. 
 \item [(b)] The $K3$ surface $(S,L)$ is Brill--Noether general. 
 \item [(c)] The classes $E_1,\dots, E_5$ define elliptic pencils and are the only classes in $\Pic(S)$ of square $0$ and degree $4$ with respect to $L$.
\end{enumerate}

\end{lemma}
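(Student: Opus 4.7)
The plan is to translate everything into the orthogonal basis $\{s_0,s_1,\dots,s_4\}$, in which a class $D = c_0 s_0 + \sum_{i=1}^4 c_i s_i$ satisfies $D^2 = 2c_0^2 - 2\sum c_i^2$ and $D \cdot L = 6 c_0 + 2 \sum c_i$ (since $L = 3s_0 - \sum s_i$). Many of the steps come down to combining these identities with Cauchy--Schwarz $(\sum c_i)^2 \le 4\sum c_i^2$ and the parity observation that $D \cdot L$ is always even.

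For (a), I will show that no $(-2)$-class $\Gamma = c_0 s_0 + \sum c_i s_i$ satisfies $L \cdot \Gamma \le 0$, except possibly those of the form $-s_i$; the same ``standard arguments'' that place $s_0$ in the closure of the ample cone also allow one to arrange that each $s_i$ (rather than $-s_i$) is effective. From $\Gamma^2=-2$ and $s_0\cdot \Gamma \ge 0$ we get $\sum c_i^2 = c_0^2+1$ and $c_0 \ge 0$. Cauchy--Schwarz yields $L\cdot \Gamma \ge 6c_0 - 4\sqrt{c_0^2+1}$, which is strictly positive once $c_0\ge 1$ (since then $36c_0^2 > 16(c_0^2+1)$). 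When $c_0=0$, $\Gamma = \pm s_i$ and the effective choice $+s_i$ satisfies $L\cdot s_i = 2>0$. Hence $L$ is ample by Nakai--Moishezon.

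For (b), consider a non-trivial effective decomposition $L\sim M+N$. Summands with negative self-intersection have $h^0 \le 1$ (a positive-dimensional linear system forces $D^2 \ge 0$), so the product $h^0(M) h^0(N)$ is at most $h^0(L)-1 = 6$ in that case. It remains to treat pairs $(M^2,N^2)$ with both entries non-negative. Since $L$ is ample (by (a)), Hodge index $(M \cdot L)^2 \ge 10\, M^2$ combined with $N\cdot L > 0$ forces $M^2,N^2 \le 8$. Writing $M = a_0 s_0 + \sum a_j s_j$, the integer $M\cdot L = 6a_0 + 2\sum a_j$ is always even, which rules out the pairs $(0,0),(0,4),(0,8),(2,2),(4,0),(8,0)$ (where $M\cdot L$ would be odd). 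Hodge index eliminates $(2,4),(2,6),(4,2),(4,4),(4,6),(6,4)$, since in each of these $(M\cdot N)^2 < M^2 N^2$. A Cauchy--Schwarz argument identical to the one in (a) shows that $(0,6)$ and $(6,0)$ admit no integer solutions. The surviving pair is $(0,2)$ (and its symmetric $(2,0)$), giving $M = E_i$ with $i\in\{1,\dots,5\}$ (by the classification in (c) below) and $h^0(E_i)\, h^0(L-E_i) = 2\cdot 3 = 6 < 7$. Hence $(S,L)$ is Brill--Noether general.

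For (c), I first classify lattice classes $D$ with $D^2=0$ and $D\cdot L = 4$. The equations $c_0^2 = \sum c_i^2$ and $3c_0 + \sum c_i = 2$ together with Cauchy--Schwarz give $(2-3c_0)^2 \le 4c_0^2$, i.e.\ $5c_0^2 - 12 c_0 + 4 \le 0$, so $c_0\in\{1,2\}$. For $c_0=1$, a unique $c_i=-1$ (the others $0$) gives $D=s_0-s_i=E_i$ for some $i\in\{1,\dots,4\}$, while $c_0=2$ forces $(c_1,\dots,c_4)=(-1,-1,-1,-1)$ and $D=E_5$. Since $L$ is ample and $L\cdot E_i = 4 > 0$, the class $-E_i$ is not effective, so Riemann--Roch gives $h^0(E_i)\ge \chi(E_i) = 2$; in particular $E_i$ is effective. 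Finally, $E_i$ is primitive in $\fM$, so by Saint-Donat's theorem the linear system $|E_i|$ is a base-point free pencil whose general member is a smooth elliptic curve of degree $E_i \cdot L = 4$.

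The main obstacle is the enumeration in (b): the individual contradictions are easy, but completeness of the case analysis requires carefully combining Hodge index, parity, and Cauchy--Schwarz.
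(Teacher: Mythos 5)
Your lattice computations in the orthogonal basis (the Cauchy--Schwarz bound for part (a), and the classification of square-zero degree-$4$ classes giving exactly $E_1,\dots,E_5$ in part (c)) are correct and close in spirit to the paper's. The genuine problems are in the cohomological assertions. First, in (b) the claim that a summand with $M^2<0$ has $h^0(M)\le 1$ is false on a $K3$ surface: if $\Gamma$ is a $(-2)$-curve with $\Gamma\cdot E=0$ for an elliptic pencil $|E|$ (e.g.\ $\Gamma=s_2$, $E=E_1=s_0-s_1$ in this very lattice), then $(E+\Gamma)^2=-2$ but $h^0(E+\Gamma)=h^0(E)=2$; a linear system of negative square simply has a fixed part, it need not be zero-dimensional. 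The standard fix is to replace $M$ by its moving part (or to invoke the reduction the paper cites, \cite[Prop.~10.5]{JK04} or \cite[Lemma~1.7]{GLT15}, which converts Brill--Noether generality into the non-existence of effective classes with $\Delta^2=0$, $\Delta\cdot L\le 3$ or $\Delta^2=2$, $\Delta\cdot L=5$ --- exactly the lattice checks you are good at). Second, in your surviving case $(0,2)$ you assert $h^0(L-E_i)=3$ with no argument. Since $\chi(L-E_i)=3$ and $h^2=0$, this is equivalent to $h^1(L-E_i)=0$, which is precisely where Brill--Noether generality could fail (if $h^1>0$ then $h^0(E_i)h^0(L-E_i)\ge 2\cdot 4=8$); it requires showing $L-E_i$ is nef, or an equivalent vanishing argument. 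The same issue infects $h^0(E_i)=2$, which equals $2+h^1(E_i)$.

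Third, in (c) you pass from ``$E_i$ is effective and primitive'' to ``$|E_i|$ is a base-point-free elliptic pencil'' by citing Saint-Donat, but Saint-Donat's structure theorem for square-zero classes requires \emph{nefness}, not primitivity: an effective primitive class of square zero can have a $(-2)$-curve as fixed component, in which case $|E_i|$ is not an elliptic pencil of degree $4$. The paper closes this by contradiction: if $E_i\cdot\Gamma=-k<0$ for a $(-2)$-curve $\Gamma$, then $E_i-k\Gamma$ is effective of square zero with $(E_i-k\Gamma)\cdot L\le 4-k\le 3$, contradicting the non-existence of such classes (i.e.\ part (b)). You need this step, or an equivalent nefness verification, and note that it makes (c) logically dependent on (b). Finally, a minor point: your Hodge-index case list omits $(6,2)$, $(6,6)$ and the pairs involving $8$; they are killed by the same inequality $(M\cdot N)^2\ge M^2N^2$, but for completeness they should be listed (or you should use the sharper bound $M^2\le (M\cdot L)^2/10\le 6$ to shrink the table first).
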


\begin{proof}
Let $\Delta = \sum_{i=0}^4 a_i s_i$ be an arbitrary class. Then $\Delta^2 = 2 a_0 - 2\sum_{i=1}^4 a_i$, whence 
$L.\Delta = 8a_0 - \Delta^2$.  If $\Delta$ is effective, then $a_0 = \frac{1}{2}s_0.\Delta  \ge 0$ since $s_0$ is nef.
It follows that $L.\Delta \ge 2$ for any $(-2)$-curve $\Delta$, whence (a) is  proved. 
It also immediately follows that there exists no nontrivial effective class $\Delta$ such that either $\Delta^2=0$ and $\Delta.L \le 3$ or $\Delta^2=2$ and $\Delta.L = 5$. This implies (b) by either a direct computation using the definition of Brill--Noether generality or invoking, e.g., \cite[Prop. 10.5]{JK04} and \cite{SD}, or \cite[Lemma 1.7]{GLT15}.
 
To prove that $|E_i|$ is an elliptic pencil, it suffices to show that $E_i$ is nef by \cite{SD}. If $E_i$ for some $i\in \{1,\dots, 5\}$ is not nef, 
there exists a $(-2)$-curve $\Gamma$ with $\Gamma.E_i\le 0$. 
Let $k:=-\Gamma.E_i \ge 1$. Then $(E_i-k\Gamma)^2 = 0$ and $E_i-k\Gamma$ is effective and nontrivial with $(E_i-k\Gamma).L \le 4-k\le 3$ by ampleness of $L$, a contradiction to the Brill--Noether generality.  Finally, if $F$ is another effective class with $F^2=0$, then $F.E_i\ge 2$ for all $i$, since $F$ moves in (at least) a pencil. Thus $F.L = \frac{1}{2}F.(E_1+\dots+E_5)\ge 5$.
\end{proof}
 
We will show that the general curve lies on a six-dimensional family of such $K3$ surfaces of Picard rank $5$. We will use the cone over the Grassmannian $G(V_5,2)$ in $\PP^{10}$. 

\subsection{K3 sections of a cone of the Grassmannian $G(V_5,2)$}

 Let $\fM$ be the rank $5$  lattice above.  
Let $\cF^{\fM}$ be the moduli space of $\fM$-polarized $K3$ surfaces  and  $\cP^{\fM}$ be as in the introduction. Recall that $\dim \cF^{\fM}=15$ and $\dim \cP^{\fM} = 21$.
 Also recall that a general genus $6$ curve carries precisely five elliptic pencils $|A_1|,\ldots,|A_5|$ of degree four, which satisfy $2K_C \sim A_1+\cdots+A_5$.

By \cite{AK11} the moduli space $\cF^{\fM}$ is birational to $\cM_6$, which is well-known to be rational by \cite{SB89}. More precisely, Artebani and Kond\={o} show that $\cF^{\fM}$ is the locus of $K3$ surfaces admitting a double cover to a quintic Del Pezzo surface branched along a curve of genus $6$. In particular, this shows that the moduli map 
$\psi: \cP^{\fM}\to \cM_6$ is dominant since we get a section. However, the pairs $(S,L)$ admit automorphisms fixing $L$, whence $\cP^{\fM}$ is not birational to a $\PP^6$-bundle over $\cF^{\fM}$ and one cannot conclude its unirationality from the rationality of $\cF^{\fM}$. We will show by our construction that $\cP^{\fM}$ is unirational and that $\cF^{\fM}$ is the space of polarized $K3$ surfaces of genus $6$ such that all the five $g^1_4$s of their smooth curve sections are induced by elliptic pencils on the surfaces.
 
\begin{theorem} 
 (a) \label{ModuliMapDominant} The moduli map $\psi: \cP^{\fM}\to \cM_6$ is dominant. Furthermore, a general curve $C$ of genus $6$ is a linear section of a smooth $K3$ surface $S$ such that its five $g^1_4$s are induced by five elliptic pencils $|E_1|,\dots,|E_5|$ on $S$ satisfying $2 C \sim E_1+\dots+E_5$. 

(b) \label{unirationality} $\cP^{\fM}$ is unirational.
\end{theorem}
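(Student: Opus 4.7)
The plan, in the spirit of the genus $4$ construction of Section \ref{genus4}, is to realize a general $\fM$-polarized $K3$ surface as a quadric section $S = \widehat{dP_5} \cap Q$, where $\widehat{dP_5} \subset \PP^6$ is the cone over a quintic Del Pezzo surface $dP_5$ anti-canonically embedded in a hyperplane $\PP^5 \subset \PP^6$, and $Q \subset \PP^6$ is a quadric hypersurface. This fits with the description of $\cF^{\fM}$ as the Brill--Noether general $K3$ surfaces of genus $6$ realized as quadratic sections of such cones, cf. Remark \ref{totti6}. The key geometric translation to set up first: the five conic pencils $|D_1|, \dots, |D_5|$ on $dP_5$ (with $D_i \cdot D_j = 1$ for $i \neq j$ and $\sum D_i = -2K_{dP_5}$) give rise to five pencils of quadric cones $\widehat{F_i} \subset \widehat{dP_5}$ (each $F_i \in |D_i|$ is a smooth conic, spanning a $\PP^2$ whose join with the vertex is a $\PP^3$ containing $\widehat{F_i}$ as a quadric surface), and intersecting each $\widehat{F_i}$ with $Q$ yields a curve of arithmetic genus $1$ and degree $4$. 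This produces five elliptic pencils $|E_i|$ on $S$ satisfying $E_i^2 = 0$, $E_i \cdot L = 4$, $E_i \cdot E_j = 2$ for $i \neq j$, and $\sum E_i \sim 2L$, which by Lemma \ref{cr6} identifies $(S, L)$ as an $\fM$-polarized $K3$ surface.

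For part (a), I would start with a general canonical curve $C$ of genus $6$ in $\PP^5$, invoke the classical fact that $C$ is contained in a generically unique quintic Del Pezzo surface $dP_5 \subset \PP^5$ with $C \in |-2K_{dP_5}|$, and write $C = dP_5 \cap Q'$ for a quadric $Q' \subset \PP^5$. Then I would embed $\PP^5$ as a hyperplane in some $\PP^6$, pick a vertex $v \in \PP^6 \setminus \PP^5$ to form $\widehat{dP_5} \subset \PP^6$, and extend $Q'$ to a generic quadric $Q \subset \PP^6$; the intersection $S := \widehat{dP_5} \cap Q$ is then a smooth $K3$ surface containing $C = S \cap \PP^5$ as a hyperplane section. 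By the construction of the $E_i$, the five conic pencils $D_i$ restrict to the $g^1_4$'s $A_i$ on $C$, so $(S, C) \in \cP^{\fM}$ lies over $[C] \in \cM_6$ and $\psi$ is dominant. Alternatively, and more cheaply, one can appeal to \cite{AK11}: the ramification curve of the double cover $S \to dP_5$ has class $L$ and is isomorphic to the branch curve, providing a section of $\psi$ directly.

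For part (b), the same explicit construction parameterizes $\cP^{\fM}$ by a tower of rational layers. First, the data of $(\PP^5 \subset \PP^6, dP_5 \subset \PP^5, v \in \PP^6 \setminus \PP^5)$ forms a rational variety, since $PGL_6$ acts with a dense orbit (with finite stabilizer $\mathrm{Aut}(dP_5)$) on the Hilbert scheme of quintic Del Pezzo surfaces in $\PP^5$, and the choice of $\PP^5 \subset \PP^6$ and $v$ is parameterized by an open subset of a Grassmannian bundle. Next, the quadric $Q \subset \PP^6$ varies in an open subset of $\PP^{27}$; finally, the hyperplane $H \subset \PP^6$ cutting out $C = S \cap H \in |L|$ varies in an open subset of $(\PP^6)^{\vee}$. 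Each layer is rational, so the total parameter space is rational, and its quotient by the $PGL_7$-action on the ambient $\PP^6$ is a unirational birational model of $\cP^{\fM}$, giving the claim.

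The most delicate points, and the main obstacle to fleshing this out rigorously, are verifying that for generic choices the constructed $S$ is a smooth $K3$ surface whose Picard lattice is exactly $\fM$ (so no extra $(-2)$-classes are introduced), together with the uniqueness of the Del Pezzo surface containing a general canonical genus $6$ curve; these are classical in nature but are what ensures that the construction actually lands inside $\cP^{\fM}$ with its distinguished polarization.
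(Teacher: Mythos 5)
Your part (a) is essentially the paper's argument in different clothing: the paper realizes $C$ as $\PP^5\cap Q\cap G(V_5,2)$ and passes to the cone $\widehat{G(V_5,2)}\subset\PP^{10}$, whose intersection with the span of $\PP^5$ and the vertex is exactly your cone $\widehat{dP_5}\subset\PP^6$; your pencils of quadric cones $\widehat{F_i}$ over the conics $F_i\in|D_i|$ are precisely the restrictions of the Schubert pencils of $G(4,2)$'s from Lemma \ref{schubertVarietyDualGrassmannian}. That part is sound (modulo the smoothness/lattice verifications you correctly flag, which the paper also treats lightly).

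Part (b) has a genuine gap: you assert, but do not prove, that your parameter space (varying $dP_5$, $v$, $Q$ and $H$, modulo $PGL_7$) \emph{dominates} $\cP^{\fM}$, and establishing this dominance is the entire nontrivial content of the paper's proof of (b). It is not automatic: $\dim\cP^{\fM}=21$ while $\psi$ has $6$-dimensional fibers over $\cM_6$, and part (a) only exhibits a single point of the construction's image in each such fiber, so dominance over $\cM_6$ does not yield dominance onto $\cP^{\fM}$. You also cannot invoke the description of $\cF^{\fM}$ as quadric sections of Del Pezzo cones via Remark \ref{totti6}, since that remark is \emph{deduced from} the proof of this theorem — that would be circular. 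The paper's route is to fix one quintic Del Pezzo $Y$ and its cone $\widehat{Y}$, use \cite{SB89} to realize every general canonical genus-$6$ curve as a quadric section of the fixed $Y$, form the $21$-dimensional incidence variety $I$ of pairs $(C,S)$ with $S\in\fL_C$ (a $\PP^6$-bundle over $|\OO_Y(2)|$), and then prove that $I\dashrightarrow\cP^{\fM}$ is generically finite, hence dominant by irreducibility and equality of dimensions; generic finiteness is obtained by showing a positive-dimensional fiber would force the family $\fL_C$ to be isotrivial, which is excluded because $\fL_C$ contains the hypersurface of reducible members $Y\cup Y'$. To repair your argument you must either supply this generic-finiteness step, or give an independent proof that every general $\fM$-polarized $K3$ surface is a quadric section of a Del Pezzo cone — for instance via the \cite{AK11} double-cover description you mention only in passing for part (a): the double cover of $Y$ branched along a curve in $|-2K_Y|$ embeds canonically as a quadric section of $\widehat{Y}\subset\PP^6$, which would make your parametrization visibly dominant.
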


\begin{proof}
(a)   We will describe a $K3$ surface containing the general curve in $\cM_6$ as well as the geometry describing the elliptic pencils on the $K3$ surface. This is based on Mukai's result \cite[\S 6]{Muk93}.
 
 Let $C\in \cM_6$ be a general curve of genus $6$ which is given  as follows. We fix a Pl\"ucker embedding of the Grassmannian $G(V_5,2)\subset \PP^9$. Then there exists a projective $5$-space  $P\subset \PP^9$ as well as a quadric hypersurface $Q\subset P$ such that $C=P\cap Q\cap G(V_5,2)$.
 
 Let $P^\vee =\PP^3\subset {\PP^9}^\vee$ be the dual space. As $C$ is assumed to be general, $W^1_4(C)$ is finite-dimensional,  more precisely $W^1_4(C)$ consists of five smooth points, and is isomorphic to $P^\vee\cap G(2,V_5)\subset {\PP^9}^\vee$, that is, the intersection of $P^\vee$ and the dual Grassmannian $G(2,V_5) = G(V_5,2)^\vee\subset {\PP^9}^\vee$. By Lemma \ref{schubertVarietyDualGrassmannian} each point of $P^\vee\cap G(2,V_5)$ corresponds to a pencil of Grassmannians of type $G(4,2)$ in $\PP^9$. This pencil induces a cubic scroll in $\PP^9$ whose restriction to $C$ cuts out the corresponding point of $W^1_4(C)$.  
  
 Now let $\widehat{G(V_5,2)}\subset \PP^{10}$ be the cone over the Grassmannian $G(V_5,2)$ with vertex point $v$. We denote  $\widehat{G(2,V_5)}\subset {\PP^{10}}^\vee$ the cone over the dual Grassmannian with vertex $w$ such that $\widehat{G(2,V_5)}=\widehat{G(V_5,2)}^\vee$. 
 We consider the given projective $5$-space $P$ as a subspace of $\PP^{10}$. 
 
 Let $P_v=\overline{P+v}$ be the span of $P$ and the vertex $v$. Let $Q'\subset P_v$ be a quadric hypersurface such that $Q'\cap  P = Q$. We get a $K3$ surface  $S=\widehat{G(V_5,2)}\cap P_v \cap Q'$,  which we can assume to be smooth for general $Q'$.  
 Then the dual space of this $P_v$ is exactly the above $P^\vee$. As above the five intersection points $P^\vee\cap \widehat{G(V_5,2)} = P^\vee \cap G(V_5,2)$ correspond to five pencils of Grassmannians in $\PP^{10}$ whose restriction to $S$ are the five elliptic pencils of degree $4$ on $S$. We get the desired $K3$ surface with the right Picard lattice.  

(b) Recall that any canonical model of a general curve of genus $6$ can be realized as a quadratic section of a fixed quintic Del Pezzo surface $Y\subset \PP^5$ (see \cite{SB89}).
 
 We fix a $\PP^6\supset \PP^5$ and a point $v\in \PP^6$. Let $\widehat{Y}$ be the cone over $Y$ with vertex $v$. 
 For a general curve $C\in \cM_6$ we consider the linear system $\fL_C$ of quadratic sections of $\widehat{Y}$ containing $C$. We have $\dim \fL_C = h^0(\PP^6,\OO_{\PP^6}(2))-h^0(\PP^5,\OO_{\PP^5}(2))-1 = 6$. 
 We define the incidence correspondence 
 $$
 I = \{(C,S)\ |\ C\subset S\}\subset |\OO_Y(2)|\times |\OO_{\widehat{Y}}(2)| = \PP^{15} \times \PP^{22} 
 $$
 together with the projection $\pi:I\to |\OO_Y(2)|$, whose fibers are given by $\fL_C$. It follows that $\pi$ has the structure of a $\PP^6$-bundle, whence $\dim(I) = 15 + 6 = 21$.
 
 By the proof of part (a) the general member of $\fL_C$ is a smooth $K3$ surface in $\cF^{\fM}$ (note that $P=\PP^5$, $P_v=\PP^6$, $Y=P\cap G(V_5,2)$ and $\widehat{Y} = \widehat{G(V_5,2)}\cap P_v$ in the notation of that proof). Hence, we get a natural rational moduli map  $\varphi: I \dashrightarrow \cP^{\fM}$.  Since $I$ is unirational, the corollary will follow if we prove that $\varphi$ is dominant, equivalently, generically finite, since $\cP^{\fM}$ is irreducible of the same dimension as $I$.
 
 Assume therefore that $\varphi$ has  positive-dimensional  fibers. Since the rational moduli map $|\OO_Y(2)|\dashrightarrow \cM_6$ is finite, the fibers of $\varphi$ lie in fibers of $\pi$. Hence, the $K3$ surfaces in $\fL_C$ do not have maximal variation in moduli. 
 Note that $\fL_C$ contains the quadratic sections of the form $Y\cup Y'$ where $Y'\in \PP H^0(\widehat{Y},\OO_{\widehat{Y}}(1))$ which form a hypersurface in $\fL_C$. 
 Hence a general one-dimensional family in $\fL_C$ is non-isotrivial, a contradiction.
\end{proof}

\begin{remark}\label{totti6}
 (a) The proof of Corollary \ref{unirationality} shows that our construction dominates the moduli space $\cF^{\fM}$, that is, the general $K3$ surface in $\cF^{\fM}$ is a quadratic section of a cone over a quintic Del Pezzo surface in $\PP^5$. 
 
 (b) By \cite{Muk93}, all Brill--Noether general $K3$ surfaces of genus $6$ can be realized as a quadratic section of either a smooth quintic Del Pezzo threefold in $\PP^6$ or a cone over a quintic Del Pezzo surface. Item (a) shows that $\cF^{\fM}$ is precisely the locus of $K3$ surfaces that cannot be realized in a smooth Del Pezzo threefold.  
\end{remark}

\section{Lazarsfeld--Mukai bundles and their stability}\label{LMbundles}

For $K3$ surfaces constructed in Sections \ref{genus4} and \ref{genus6} we will show that these are $K3$ surfaces without any stable rank $2$ Lazarsfeld--Mukai bundle with determinant $L$ and $c_2 = 3$ or $4$, respectively. This shows in particular that the result of Lelli-Chiesa \cite[ Thm.  4.3]{LC13} about stability of rank $2$ vector bundles on $K3$ surfaces is optimal.

We recall the definition and basic properties of Lazarsfeld-Mukai bundles, which will also be needed in Section \ref{genus8}. Let $S$ be a $K3$ surface and let $C\subset S$ be a smooth curve of genus $g$ with a globally generated  line bundle $A$ of degree $d$   
with $h^0(C,A)=r+1$. The \emph{Lazarsfeld-Mukai bundle} $\cE_{C,A}$ is defined via 
an elementary transformation on $S$: 
\begin{equation} \label{eq:eltrans}
0 \longrightarrow \cE_{C,A}^\vee \longrightarrow H^0(C,A)\otimes \OO_S \longrightarrow A \longrightarrow 0,
\end{equation}
where $A$ is considered as a coherent sheaf on $S$ supported on $C$. Hence, it is a bundle of rank $r+1$ satisfying $c_1(\cE_{C,A})=[C]$, $c_2(\cE_{C,A}) = \deg A = d$ and $H^i(S,\cE_{C,A}) = 0$ for $i=1,2$. 
The bundles have been introduced by Lazarsfeld \cite{L86} and Mukai \cite{Muk89}. 
Dualizing the above sequence, we get 
$$
0 \longrightarrow H^0(C,A)^*\otimes \OO_S \longrightarrow \cE_{C,A} \longrightarrow  \omega_C\otimes A^* \longrightarrow 0,
$$
 and in particular a distinguished $(r+1)$-dimensional subspace $H^0(C,A)^* \subset H^0(\cE_{C,A})$. 
Equivalently, by \cite[Prop. 1.3]{A13}, a rank $(r+1)$-bundle $\cE$ on $S$ is a Lazarsfeld-Mukai bundle if and only if $h^1(S,\cE)=h^2(S,\cE) = 0$ and there exists  an $(r+1)$-dimensional subspace  $V\subset H^0(S,\cE)$  such that the degeneracy locus of the  evaluation morphism $V  \otimes \OO_S\to \cE$  is a smooth curve. 

\begin{lemma}\label{LMnotstable}
 If $A\in W^1_{d}(C)$ with $d\le g-1$ is induced by an elliptic pencil $|E|$ on the $K3$ surface $S$, then $\cE_{C,A}$ is not $L$-stable, where $L = \OO_S(C)$.  
\end{lemma}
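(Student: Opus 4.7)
The plan is to produce an explicit line bundle subsheaf of $\mathcal{E}_{C,A}$ that violates the slope inequality.

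Since $|E|$ induces $A$, we have $A \cong \mathcal{O}_S(E)|_C$. First I would verify that restriction induces an isomorphism $H^0(S, \mathcal{O}_S(E)) \xrightarrow{\sim} H^0(C, A)$. Both spaces are $2$-dimensional ($|E|$ is an elliptic pencil and $A \in W^1_d$), and injectivity follows from $h^0(\mathcal{O}_S(E-C)) = 0$: indeed, $(E-C)\cdot L = d - (2g-2) < 0$ because $d \le g-1 < 2g-2$ for $g \ge 2$, and $L$ is ample.

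Next, using base-point-freeness of $|E|$ (Saint-Donat), the evaluation map $H^0(C,A)\otimes \mathcal{O}_S \twoheadrightarrow A$ of (\ref{eq:eltrans}) factors as
$$
H^0(S,\mathcal{O}_S(E))\otimes \mathcal{O}_S \twoheadrightarrow \mathcal{O}_S(E) \twoheadrightarrow A,
$$
with kernels $\mathcal{O}_S(-E)$ and $\mathcal{O}_S(E-C)$ respectively. The snake lemma then delivers
$$
0 \to \mathcal{O}_S(-E) \to \mathcal{E}_{C,A}^\vee \to \mathcal{O}_S(E-C) \to 0,
$$
and dualizing (all terms are locally free) gives the extension
$$
0 \to \mathcal{O}_S(C-E) \to \mathcal{E}_{C,A} \to \mathcal{O}_S(E) \to 0.
$$

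Finally, I would compare slopes: $\mu_L(\mathcal{E}_{C,A}) = L^2/2 = g-1$, while $\mu_L(\mathcal{O}_S(C-E)) = L\cdot(C-E) = (2g-2)-d \ge g-1$ by the hypothesis $d\le g-1$. Hence $\mathcal{O}_S(C-E)$ destabilizes $\mathcal{E}_{C,A}$, so $\mathcal{E}_{C,A}$ fails to be $L$-stable. The only step requiring real care is the diagram chase producing the extension, but it is a direct application of the snake lemma once the factorization through $\mathcal{O}_S(E)$ is in place; no serious obstacle is expected.
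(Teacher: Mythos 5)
Your proof is correct and follows essentially the same route as the paper: both compare the defining sequence of $\cE_{C,A}$ with the evaluation sequence of $\OO_S(E)$ via the snake lemma to exhibit $\OO_S(C-E)$ as a sub-line-bundle, and then conclude by the identical slope computation $\mu_L(\OO_S(C-E))=2g-2-d\ge g-1=\mu_L(\cE_{C,A})$. The preliminary check that restriction gives $H^0(S,\OO_S(E))\cong H^0(C,A)$ is exactly the isomorphism of middle columns in the paper's diagram, so there is no substantive difference.
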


\begin{proof}
 This is essentially already contained in \cite[Proof of Thm. 1.1]{AFO}. Using the snake lemma, we get the following commutative diagram 
\vspace{-0.3cm}
$$
\begin{xy}
 \xymatrix{
  & & & 0 \ar[d] & \\
  & 0 \ar[d] & 0\ar[d] & E \otimes L^* \ar[d] &  \\
 0 \ar[r] & E^* \ar[r] \ar[d] & H^0(S,E) \otimes \OO_S \ar[r] \ar[d]^{\cong} & E \ar[r] \ar[d] & 0 \\ 
 0 \ar[r] & \cE_{C,A}^{\vee} \ar[r] \ar[d] & H^0(C,A) \otimes \OO_S \ar[r] \ar[d] & A \ar[r] \ar[d] & 0 \\
  & E \otimes L^* \ar[d] & 0 & 0 & \\
  & 0 & & & 
 }
\end{xy}
$$
Dualizing the  left   column, we see that $L\otimes E^*$ is a subbundle of $\cE_{C,A}$.
Computing slopes, we get $\mu(L\otimes E^*) =  2g-2-d \ge g-1  = \mu(\cE_{C,A})$.
\end{proof}

\begin{corollary}\label{voeller4}
 Let $(S,L)\in \cF^{\fU(3)}_4$ be a Brill--Noether general polarized $K3$ surface as in Section \ref{exK3g4k3}. Then $S$ contains only $L$-strictly semistable Lazarsfeld--Mukai bundles $\cE_{C,A}$ of rank $2$ and $\det(\cE_{C,A})=L$, $c_2(\cE_{C,A}) = 3$ for $C\in |L|$ smooth. 
\end{corollary}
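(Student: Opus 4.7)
My plan is to verify two facts for every smooth $C\in|L|$ and every globally generated $A$ giving rise to a rank-$2$ Lazarsfeld--Mukai bundle $\cE_{C,A}$ with $\det=L$ and $c_2=3$ (equivalently, $A$ is a complete, base-point-free $g^1_3$ on $C$): (i) $A$ is necessarily one of the two restrictions $\OO_C(E_1)$, $\OO_C(E_2)$, and (ii) the resulting $\cE_{C,A}$ is $L$-semistable. Non-stability then comes for free from Lemma \ref{LMnotstable}, so the active work splits into the identification of $W^1_3(C)$ and the semistability bound. I expect (i) to be the main obstacle, since it requires a careful geometric analysis including the degenerate hyperplane sections.

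For (i), I would first note that $\fU(3)$ contains no isotropic class of $L$-degree $2$ (every $L$-degree is a multiple of $3$), so by the Brill--Noether generality of $(S,L)$ every smooth section $C$ is a non-hyperelliptic genus-$4$ curve; in particular, its canonical model in $\PP^3$ lies on a unique irreducible quadric $Q_C$ and $W^1_3(C)$ is in bijection with the rulings of $Q_C$. Writing $C=S\cap H$ for a hyperplane $H\subset\PP^4$, I have $Q_C=Q\cap H$, where $Q$ is the unique (rank-$4$) quadric containing $S$ from \S\ref{exK3g4k3}. The two rulings of $Q$ cut out precisely the pencils $|E_1|,|E_2|$ on $S$, so their restrictions to $C$ exhaust $W^1_3(C)$; they coincide exactly when $H$ passes through the vertex of $Q$, which is the only subtlety to handle.

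For (ii), I would exploit the extension structure of $\cE_{C,A}$ built in the proof of Lemma \ref{LMnotstable}. Taking $E=E_i$ and using $L-E_i=E_{3-i}$, the left column of the diagram there dualizes to
\[
0 \longrightarrow \OO_S(E_{3-i}) \longrightarrow \cE_{C,A} \longrightarrow \OO_S(E_i) \longrightarrow 0.
\]
Any saturated rank-$1$ subsheaf $M\subset\cE_{C,A}$ is a line bundle, and the composition $M\to\OO_S(E_i)$ is either zero (so $M\hookrightarrow\OO_S(E_{3-i})$) or a nonzero map of line bundles, hence injective (so $M\hookrightarrow\OO_S(E_i)$). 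In either case $M\cong\OO_S(E_j)(-D)$ for some $j\in\{i,3-i\}$ and some effective divisor $D$, giving $M\cdot L\le E_j\cdot L=3=\mu_L(\cE_{C,A})$, which is $L$-semistability. Combined with Lemma \ref{LMnotstable}, this yields strict semistability.
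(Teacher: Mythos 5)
Your proposal is correct and follows the same route as the paper: the paper's proof simply notes that $W^1_3(C)$ consists of the two residual pencils cut out by $|E_1|$ and $|E_2|$ and invokes Lemma \ref{LMnotstable}. The one genuine addition on your part is the explicit semistability check via the extension $0\to\OO_S(E_{3-i})\to\cE_{C,A}\to\OO_S(E_i)\to 0$, which the paper leaves implicit but which is needed to justify the word ``strictly semistable''; your argument for it (any saturated line subsheaf injects into the sub or the quotient, hence has $L$-degree at most $3=\mu_L(\cE_{C,A})$) is correct, as is your handling of the degenerate case where the hyperplane passes through the vertex of $Q$ and the two $g^1_3$s coincide.
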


\begin{proof}
Note that $W^1_3(C)$ consists of exactly two residual pencils of divisors which extend to two elliptic pencils on $S$. We can apply Lemma \ref{LMnotstable}, and the corollary follows. 
\end{proof}

\begin{corollary}\label{voeller6}
 Let $(S,L)\in \cF^{\fM}_6$ be a Brill--Noether general polarized $K3$ surface as in Section \ref{genus6}. Then $S$ contains only $L$-unstable Lazarsfeld--Mukai bundles $\cE_{C,A}$ of rank $2$ and $\det(\cE_{C,A})=L$, $c_2(\cE_{C,A}) = 4$ for $C\in |L|$ smooth. 
\end{corollary}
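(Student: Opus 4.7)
The plan is to show that every rank-$2$ Lazarsfeld--Mukai bundle $\cE=\cE_{C,A}$ with $\det\cE=L$ and $c_2(\cE)=4$ is actually induced by one of the five elliptic pencils $|E_1|,\dots,|E_5|$ on $S$, and then to invoke Lemma \ref{LMnotstable} after noting that for $g=6$ and $d=4$ the slope inequality appearing there is strict. The condition $c_2(\cE)=\deg A=4$ together with $\rk\cE=h^0(C,A)=2$ means that $A$ is a complete, base-point-free pencil in $W^1_4(C)$, so the whole problem reduces to identifying $W^1_4(C)$.

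First I would apply Lemma \ref{cr6}(b) together with \cite[Lemma 1.7]{GLT15} to deduce that $C$ is Brill--Noether general, so that $|W^1_4(C)|=5$ by the Castelnuovo count for $\rho(6,1,4)=0$. On the other hand, by Lemma \ref{cr6}(c) the surface $S$ carries exactly five classes $E_1,\dots,E_5$ of square zero and $L$-degree four; each $E_i$ restricts to an element of $W^1_4(C)$ via the injection $H^0(S,E_i)\hookrightarrow H^0(C,E_i|_C)$ obtained from the vanishing $h^0(S,E_i-L)=0$, which is immediate from $(E_i-L)\cdot L<0$ and ampleness of $L$. The five restrictions are pairwise distinct: the analogous restriction sequence gives $h^0(C,(E_i-E_j)|_C)=h^0(S,E_i-E_j)$, and since $(E_i-E_j)\cdot L=0$ with $L$ ample, $E_i-E_j$ is effective only when trivial. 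A cardinality count then forces $W^1_4(C)=\{E_1|_C,\dots,E_5|_C\}$, so that $A\cong E_i|_C$ for some $i$.

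Applying Lemma \ref{LMnotstable} to this $A$ with $g=6$ and $d=4$ yields a sub-bundle $L\otimes E_i^*\subset\cE_{C,A}$ of slope $\mu(L\otimes E_i^*)=2g-2-d=6$, which is \emph{strictly} greater than $\mu(\cE_{C,A})=g-1=5$. Thus $\cE_{C,A}$ is $L$-unstable rather than only strictly semistable. This is precisely the qualitative difference from Corollary \ref{voeller4}: in the genus $4$ case the same slope computation gives equality, hence only strict semistability. The only slightly non-automatic point, and the one that would occupy the proof, is the distinctness of the five restrictions $E_i|_C$; beyond that the conclusion follows from a one-line slope estimate.
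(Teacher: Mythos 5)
Your overall strategy coincides with the paper's: show that every base-point-free $g^1_4$ on $C$ is the restriction of one of the elliptic pencils $|E_1|,\dots,|E_5|$, and then apply Lemma \ref{LMnotstable}, observing that for $d=4<g-1=5$ the slope inequality there is strict, so one gets genuine instability rather than mere non-stability. The paper dispatches the first step in one sentence, implicitly relying on the Grassmannian picture of Theorem \ref{ModuliMapDominant}(a), where $W^1_4(C)\cong P^\vee\cap G(2,V_5)$ consists of five points, each of which simultaneously cuts out a $g^1_4$ on $C$ and an elliptic pencil on $S$; you instead argue lattice-theoretically, which is a legitimate alternative and makes the statement independent of the particular projective model.

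The one step that does not hold as written is your distinctness argument. The restriction sequence $0\to\OO_S(E_i-E_j-L)\to\OO_S(E_i-E_j)\to\OO_C((E_i-E_j)|_C)\to 0$ gives $h^0(C,(E_i-E_j)|_C)=h^0(S,E_i-E_j)$ only if, in addition to $h^0(E_i-E_j-L)=0$ (which is indeed clear from ampleness of $L$), one also has $h^1(S,E_i-E_j-L)=0$; since $(E_i-E_j-L)^2=6$, this vanishing is not automatic and you do not justify it, so the chain of equalities breaks there. The gap is easily repaired without any cohomology: if $E_i|_C\cong E_j|_C$ for $i\neq j$, then a general divisor $D$ of this $g^1_4$ is cut out on $C$ both by some irreducible $E_i'\in|E_i|$ and by some irreducible $E_j'\in|E_j|$ (the restriction maps $H^0(S,E_i)\to H^0(C,E_i|_C)$ being isomorphisms), so the length-$4$ scheme $D$ would be contained in $E_i'\cap E_j'$, which has length $E_i.E_j=2$ --- a contradiction. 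With that replacement, and granting the standard fact that $W^1_4(C)$ is a zero-dimensional scheme of length $5$ for a Brill--Noether general curve of genus $6$ (so that five distinct points exhaust it), your argument is complete; the remaining slope computation and the contrast with the genus-$4$ case are correct.
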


\begin{proof}
Since $C$ is Brill--Noether general, every pencil in $W^1_4(C)$ is induced by an elliptic pencil on the $K3$ surface $S$. The result follows from Lemma \ref{LMnotstable}.
\end{proof} 

\begin{remark}\label{RemarkLC}
 Part (i) of \cite[Thm. 4.3]{LC13} implies that on any Brill--Noether general $K3$ surface $(S,L)$ of genus $g$ there are $L$-stable Lazarsfeld--Mukai bundles of determinant $L$ and $c_2$ equal to $d$ as soon as $\rho(g,1,d)>0$. (Indeed, sections of Brill--Noether general $K3$ surfaces have maximal gonality as a consequence of the definition and have Clifford dimension $1$ by ampleness of $L$, cf. \cite[Thm. 1.2]{Knutsi09} or \cite[Prop. 3.3]{CP95}). The above corollaries show that this does not always hold for $\rho(g,1,d)=0$  (at least when $g=4$ or $6$).
\end{remark}

\section{K3 surfaces of genus 8}\label{genus8}

In this section we construct $K3$ surfaces of genus $8$ with the maximal number of elliptic pencils of degree $5$. 
We recall Mukai's construction from \cite{Muk93, Muk02} and fix our notation. 

Let $(S,L)$ be a Brill--Noether general polarized $K3$ surface of genus $8$. Then there exists a unique globally generated stable vector bundle $\cE$ of rank $2$ with determinant $L$ and Euler characteristic $6$ (this can be constructed as the Lazarsfeld--Mukai bundle associated to a $g^1_5$ on any smooth $C\in |L|$ not induced by an elliptic pencil on $S$ by \cite[Prop. 1.3]{A13}). It is known that $V_6=H^0(S,\cE)$ is six-dimensional. Every fiber $\cE_s$ of $\cE$ for $s\in S$ is a $2$-dimensional quotient space of $V_6$, which induces a morphism 
$
\phi_{\cE}: S \to G(V_6,2), s \mapsto \cE_s.
$
The Grassmannian $G(V_6,2)$ is naturally embedded into $\PP^*(\bigwedge^2 V_6)=\PP^{14}$ via the Pl\"ucker embedding. The second exterior product induces a surjective map on global sections
$$
\lambda: \bigwedge^2 H^0(S,\cE) \to H^0(S,\bigwedge^2 \cE),
$$
and we get the following commutative diagram
$$
\begin{xy}
\xymatrix{
	S \ar[rr]^{\phi_{\cE}} \ar[d]_{\phi_{\bigwedge^2 \cE}}   & &  G(V_6,2) \ar[d]^{\text{Pl\"ucker}}  \\
	\PP^8=\PP^*(H^0(S,\bigwedge^2 \cE)) \ar[rr]^{\ \ \ \ \ \ \PP^*(\lambda)}       &      &   \PP^{14} 
}
\end{xy}
$$
where $\PP^*(\lambda)$ is the linear embedding induced by $\lambda$. Since $\bigwedge^2 \cE =c_1(\cE)= L$, the map $\phi_{\bigwedge^2 \cE}$ is given by the linear system $|L|$. The above diagram is cartesian, that is, $S=\PP^8 \cap G(V_6,2)$. 

Hyperplane sections of $G(V_6,2)$ are parametrized by $\PP_*(\bigwedge^2 V_6)$. The dual of $\PP^8$ is a  five-dimensional  projective space $\PP^5=\PP_*(\ker \lambda)\subset \PP_*(\bigwedge^2 V_6)$. 

 Let $C\in |L|$ be a smooth curve. The Brill--Noether generality of $(S,L)$ is equivalent to $C$ not containing a $g^2_7$ (arguing as in \cite{L86, GL87} or see \cite[Lemma 1.7]{GLT15}). Let $\cE_C$ be the restriction of $\cE$ to $C$, which is stable by \cite[\S 3]{Muk93} and $H^0(S,\cE)\cong H^0(C,\cE_C)$. As above we get a surjective morphism $\lambda_C: \bigwedge^2 H^0(C,\cE_C) \to H^0(C,\omega_C)$ 
and a commutative cartesian diagram
$$
\begin{xy}
\xymatrix{
	C \ar[rr] \ar[d]   & &  G(V_6,2) \ar[d]^{\text{Pl\"ucker}}  \\
	\PP^7 = \PP(H^0(C,\omega_C)^*) \ar[rr]^{\ \ \ \ \ \PP^*(\lambda_C)}       &      &   \PP^{14} 
}
\end{xy}
$$
since $\PP_*(\lambda_C)\cap G(2,V_6) \cong W^1_5(C)$ is finite (see \cite[Thm. C]{Muk93}). Note that $\PP_*(\lambda_C)$ is a six-dimensional space containing $\PP_*(\lambda)$.

For our purpose we state Mukai's result in the following form. 

\begin{lemma}[Mukai] \label{mukaione}
A linear intersection of $G(V_6,2)$ and $\PP^8$ is a surface (whence a Brill--Noether general $K3$ surface if smooth) if and only if the dual projective space $\PP^5$ intersects the Grassmannian $G(2,V_6)$ in the following way: for every $\PP^6\supset \PP^5$ the intersection with $G(2,V_6)\subset \PP_*(\bigwedge^2 V_6)$ is finite.
\end{lemma}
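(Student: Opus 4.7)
First I would establish the analog of Lemma~\ref{schubertVarietyDualGrassmannian} for $G(V_6,2)$: a hyperplane $H \subset \PP^{14}$ corresponds to a point $w \in G(2,V_6) \subset (\PP^{14})^\vee$ if and only if $H \cap G(V_6,2)$ is a Schubert subvariety. Explicitly, writing $U_w \subset V_6$ for the $2$-dimensional subspace corresponding to $w$, one has $H_w \cap G(V_6,2) = \Sigma_w = \{q \in G(V_6,2) : \ker q \cap U_w \neq 0\}$, a $7$-dimensional variety which is a $\PP^1$-family (parametrized by the lines $L \subset U_w$) of sub-Grassmannians $G(V_6/L, 2) \cong G(V_5, 2)$.

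The expected dimension of $G(V_6,2) \cap \PP^8$ in $\PP^{14}$ is $8 + 8 - 14 = 2$, so the intersection is a surface if and only if $\dim(G(V_6,2) \cap \PP^7) \leq 1$ for every hyperplane $\PP^7 \subset \PP^8$. By projective duality in $\PP^{14}$, hyperplanes $\PP^7 \subset \PP^8$ are in bijection with linear subspaces $\PP^6 \supset \PP^5$ in $(\PP^{14})^\vee$, and the points of $\PP^6 \cap G(2,V_6)$ are precisely those $w$ for which $H_w \supset \PP^7$. The lemma thus reduces to the equivalence: $\dim(G(V_6,2) \cap \PP^7) \geq 2$ if and only if $\PP^6 \cap G(2,V_6)$ is positive-dimensional.

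For the reverse implication, a curve $\Gamma \subset \PP^6 \cap G(2,V_6)$ gives $G(V_6,2) \cap \PP^7 \subseteq \bigcap_{w \in \Gamma} \Sigma_w$; using the $\PP^1$-fibration of each $\Sigma_w$ and the fact that two distinct sub-Grassmannians $G(V_5, 2), G(V_5', 2) \subset G(V_6, 2)$ meet along a $G(V_4, 2)$ of dimension $4$, one verifies that $\bigcap_{w \in \Gamma} \Sigma_w$ contains a component of dimension at least $2$. For the forward implication, at a smooth point $q$ of a $\geq 2$-dimensional component $Y$ of $G(V_6,2) \cap \PP^7$, a tangent space count yields $\dim(T_q G(V_6,2) + T_q \PP^7) \leq 13$, so there is a pencil of hyperplanes through $\PP^7$ tangent to $G(V_6,2)$ at $q$; as $q$ varies on $Y$, these tangent hyperplanes trace out a positive-dimensional subvariety of $\PP^6$, and using the global structure of $Y$ (in particular, its sweeping by sub-Grassmannians) one argues that these hyperplanes are maximally tangent and hence Schubert, so they lie in $G(2, V_6)$. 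The main obstacle is exactly this last point: verifying that the positive-dimensional family of tangent hyperplanes produced by moving $q$ is contained in $G(2, V_6)$ rather than merely in the larger Pfaffian locus of all tangent hyperplanes to the Grassmannian.
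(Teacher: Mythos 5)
Your set-up (Schubert hyperplanes, duality, reduction to the statement that $\dim(G(V_6,2)\cap\PP^7)\ge 2$ for some $\PP^7\subset\PP^8$ if and only if the corresponding $\PP^6\cap G(2,V_6)$ is infinite) agrees with the framework of \S\ref{secDualGrassmannian}; the paper itself then disposes of the lemma in two lines, deducing the ``only if'' part from the finiteness of $W^1_5(C)$ for the smooth curve sections (Mukai's Theorem C) rather than from linear algebra. Both of your implications, however, have genuine gaps. The decisive one is in the forward implication, which you flag yourself: the hyperplanes produced by your tangent-space count lie a priori only in the \emph{dual variety} of $G(V_6,2)$, which is the cubic Pfaffian hypersurface of $2$-forms of rank $\le 4$; the Grassmannian $G(2,V_6)$ is merely its singular locus (rank exactly $2$). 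Since the Pfaffian is a hypersurface in $\PP^{14}$, \emph{every} $\PP^6$ meets it in a $5$-dimensional set, so exhibiting a positive-dimensional family of tangent hyperplanes through $\PP^7$ proves nothing. A hyperplane is forced to be Schubert only when its section of $G(V_6,2)$ is singular along a positive-dimensional locus (a rank-$4$ hyperplane section has a single singular point, a Schubert one is singular along a $4$-dimensional quadric), and your construction does not produce such hyperplanes; letting $q$ vary does not help either, since the fibres of $q\mapsto H$ over non-Schubert tangent hyperplanes are finite, so the image can perfectly well be a $2$-dimensional subvariety of the $5$-dimensional $\PP^6\cap(\mathrm{Pfaffian})$ disjoint from $G(2,V_6)$. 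This is precisely the direction the paper needs for its constructions, so the gap is not cosmetic.

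The reverse implication also does not close. The containment $G(V_6,2)\cap\PP^7\subseteq\bigcap_{w\in\Gamma}\Sigma_w$ goes the wrong way: a lower bound on the dimension of the right-hand side gives no lower bound on the left-hand side. The two sides coincide only when $\Gamma$ spans $\PP^6$, and they genuinely differ otherwise. For instance, if $\Gamma$ is a line of $G(2,V_6)$, say $\langle\Gamma\rangle=\PP_*(V_1\wedge V_3)$, then $\bigcap_{w\in\Gamma}\Sigma_w=\Sigma_{w_1}\cap\Sigma_{w_2}=G(V_6,2)\cap\langle\Gamma\rangle^{\perp}$ is a six-fold in the $\PP^{12}=\langle\Gamma\rangle^{\perp}$, and a general $\PP^7$ inside that $\PP^{12}$ meets it in a curve; so a purely linear-algebraic argument valid for arbitrary linear sections cannot establish this direction, and one should instead follow the paper and derive it from the Brill--Noether generality of the smooth hyperplane sections. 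Even in the spanning case, your claim that $\bigcap_{w\in\Gamma}\Sigma_w$ has a component of dimension at least $2$ is asserted rather than proved; the intersection of the pencils of sub-Grassmannians along a moving $w$ shrinks as $\Gamma$ grows, and the $G(V_4,2)$ observation only controls pairwise intersections.
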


\begin{proof}
 The ''only if`` part follows from the above.  Conversely, the second condition is equivalent to any hyperplane section of the given linear section being a curve.  
\end{proof}

\subsection{Linear sections of $G(V_6,2)$ and elliptic pencils} 
We are interested in $K3$ surfaces $S\subset \PP^8$ with an elliptic pencil of minimal degree $5$. We describe a way of constructing such $K3$ surfaces. 

We use the  notation above.  Let $V_6$ be a $6$-dimensional complex vector space, and let $V_5$ be a $5$-dimensional subspace of $V_6$. We consider $G(V_5,2)\subset G(V_6,2)\subset \PP^*(\bigwedge^2 V_6)$. By a dimension count, a general $8$-dimensional linear subspace of $\PP^{14}$ intersects $G(V_5,2)$ in $5$ points. Assume instead that our $\PP^8$ intersects $G(V_6,2)$ transversally and $\PP^8\cap G(V_5,2)$ is a smooth curve, which is then an irreducible elliptic normal curve of degree $5$. Then we get a $K3$ surface $S$ with an elliptic pencil. 

\subsubsection{Dual Grassmannian and Schubert varieties}\label{secDualGrassmannian}
Even more is true. As Mukai already notices in \cite[end of p.3]{Muk93},  a hyperplane corresponds to a point in the dual Grassmannian $G(2,V_6)\subset \PP_*(\bigwedge^2 V_6)$ if and only if it cuts out a Schubert subvariety. We will explain this fact in detail. 

Let $U\in G(2,V_6)$ be a point in the Grassmannian, that is, $U\subset V_6$ be a $2$-dimensional subspace of $V_6$. Hence, $U^{\perp}=V_6/U$ is a $4$-dimensional quotient of $V_6$. By the perfect pairing $\bigwedge^2 V_6\otimes \bigwedge^4 V_6\to \CC$ we may  interpret $U^{\perp}$ as a linear function on $\bigwedge^2 V_6$, denoted by $H_U$. We compute the hyperplane section $H_U\cap G(V_6,2)$. By definition $H_U: \ker(\bigwedge^2 V_6 \stackrel{\wedge^4 U^{\perp}}{\longrightarrow} \bigwedge^6 V_6 = \CC)$. Thus, 

\begin{align*}
H_U\cap G(V_6,2) & = \{ U'\in G(V_6,2) \ |\ \bigwedge^2 U' \wedge \bigwedge^4 U^{\perp} = 0 \} \\
 & =\{U'\in G(V_6,2) | \dim(U'\cap U^{\perp})\ge 1\}  =: \Sigma_1(U^{\perp})
\end{align*} 
is a Schubert variety. Note that $\dim(U'\cup U^{\perp})\le 5$ for $U'\in H_U\cap G(V_6,2)$, and it is easy to check that 
$$
\Sigma_1(U^{\perp}) = \bigcup_{v\in W} G(U^{\perp}\cup v, 2), 
$$
where $W \oplus U^{\perp}=V_6$. Note that everything is compatible with projectivization. Finally, we see that $\PP^*(H_U)\cap G(V_6,2)\subset \PP^{14}$ is the union of a pencil of Grassmannian of type $G(5,2)$. The converse direction can be shown similarly. 

We conclude that every intersection point of $\PP_*(\ker \lambda)\cap G(2,V_6)$ gives a pencil of elliptic curves on $S$. In order to get $K3$ surfaces with many elliptic pencils of degree $5$, we have to construct a transversal linear section $\PP^8$ such that its dual $\PP_*(\ker \lambda)$ intersects the Grassmannian $G(2,V_6)$ in as many points as possible.

\subsubsection{Extension of elliptic curves to the Grassmannian  $G(V_6,2)$} \label{sec_lift_E}

Let  $(S,L)$ be a Brill--Noether general polarized $K3$ surface of genus $8$ with an elliptic pencil $|E|$ satisfying $L.E = 5$. As $S$ can be embedded (as a linear section) into the Grassmannian $G(V_6,2)$, we will show that every elliptic curve $E'\in |E|$ is a linear section of a sub-Grassmannian  of type $G(5,2)$ of $G(V_6,2)$. 

We need some lemmas. 
We note that $(L-E)^2=4$ and $(L-E) . L =9$, whence $h^0(L-E) \geq 4$ by Serre duality and Riemann--Roch.

\begin{lemma} \label{residualSystem}
 The complete linear system $|L-E|$ is base point free and maps $S$ birationally onto a quartic surface in $\PP^3$ having at most isolated $A_1$-singularities coming from contractions of smooth rational curves $\Gamma$ satisfying $\Gamma . L=\Gamma . E=1$.  
\end{lemma}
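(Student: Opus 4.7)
Plan. The argument rests on the numerical data of $L-E$ together with Brill--Noether generality of $(S,L)$ and Saint-Donat's classical theory of linear systems on K3 surfaces. The basic computation gives $(L-E)^2 = L^2 - 2L\cdot E + E^2 = 4$, $(L-E)\cdot L = 9$, and since $L$ is ample the class $E-L$ is not effective, forcing $h^2(L-E) = 0$ by Serre duality and $h^0(L-E) \geq \chi(L-E) = 4$ by Riemann--Roch.

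The first main step is to prove that $L-E$ is nef. Were this to fail, there would be a $(-2)$-curve $\Gamma$ with $\Gamma\cdot(L-E) \leq -1$, whence $\Gamma\cdot E \geq \Gamma\cdot L + 1 \geq 2$ by ampleness of $L$. Such a $\Gamma$ together with its interaction with $|E|$ can be used to produce, after a decomposition argument on $E - k\Gamma$ (or a partial sum of the irreducible components of its effective part), an effective class $D$ of square zero with $D\cdot L \leq 4$; this contradicts Brill--Noether generality of $(S,L)$, which excludes exactly such classes. Once nefness is established, Saint-Donat's theorem gives $|L-E|$ base-point free, since $(L-E)^2 = 4$ rules out the exceptional unigonal case (which forces self-intersection $2$); Kawamata--Viehweg vanishing yields $h^0(L-E) = 4$, producing a morphism $\phi: S \to \PP^3$.

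It remains to show $\phi$ is birational onto a quartic and to identify the contracted curves. The image has degree $(L-E)^2/\deg\phi = 4/\deg\phi$, so I need only exclude the two-to-one case onto a quadric surface. A double cover onto a smooth quadric would, via the pullback of a ruling, produce an elliptic pencil $|F|$ on $S$ with $F\cdot(L-E) = 2$, and since Brill--Noether generality forces $F\cdot L \geq 5$ we would have $F\cdot E \geq 3$; a further analysis of the sublattice $\langle L, E, F\rangle$, again exploiting Brill--Noether generality on suitable effective subclasses, produces a contradiction. Finally, $\phi$ contracts exactly the $(-2)$-curves $\Gamma$ with $\Gamma\cdot L = \Gamma\cdot E$, each yielding an $A_1$-singularity. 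To rule out $\Gamma\cdot L = \Gamma\cdot E \geq 2$, I would decompose $E - \Gamma$ into effective $(-2)$-curves and extract a partial sum that is a class of square zero with $L$-degree at most $4$, contradicting Brill--Noether generality; this leaves only $\Gamma\cdot L = \Gamma\cdot E = 1$ as in the claim.

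The principal obstacle throughout is the Brill--Noether-generality analysis used at each stage to rule out hypothetical $(-2)$-curves and extra elliptic pencils with inconvenient numerical behavior. In particular, the decomposition arguments behind the nefness step, the exclusion of the two-to-one case, and the restriction to $\Gamma\cdot L = 1$ all reduce to locating, among the irreducible components of certain effective divisors constructed from the offending curve, an elliptic class of $L$-degree $\leq 4$; executing these reductions carefully in all subcases is the technical heart of the proof.
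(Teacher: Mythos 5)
Your skeleton (nefness via Brill--Noether generality, then Saint-Donat for base point freeness and birationality, then identification of the contracted curves) matches the paper's, but the two mechanisms you lean on most heavily do not work as stated. First, the ``decomposition argument on $E-k\Gamma$'' producing an effective square-zero class $D$ with $D\cdot L\le 4$: the standard trick of subtracting a $(-2)$-curve $k$ times to land on a square-zero class requires $\Gamma\cdot E<0$, whereas in your situation $\Gamma\cdot E\ge 2>0$, so $(E-k\Gamma)^2=-2k\,\Gamma\cdot E-2k^2<0$ and $E-k\Gamma$ need not even be effective; no partial sum of components is forced to have square zero and small $L$-degree. The same objection applies verbatim to your final step ruling out $\Gamma\cdot L=\Gamma\cdot E\ge 2$. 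The working mechanism (the paper's) is to apply the defining inequality $h^0(M)h^0(N)<9$ directly to the decomposition $L\sim(E+\Delta)+(L-E-\Delta)$: if $\Delta\cdot(L-E)\le 0$ then $(L-E-\Delta)^2\ge 2$ gives $h^0(L-E-\Delta)\ge 3$, which forces $h^0(E+\Delta)=2$ and hence $(E+\Delta)^2\le 0$, i.e.\ $\Delta\cdot E\le 1$; combined with $0<\Delta\cdot L\le\Delta\cdot E$ this yields $\Delta\cdot L=\Delta\cdot E=1$ and $\Delta\cdot(L-E)=0$ in one stroke, settling both nefness and the description of the contracted curves.

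Second, your claim that $(L-E)^2=4$ by itself excludes the base-point case of Saint-Donat is false: a nef class of the form $kE'+\Gamma$ with $E'^2=0$, $\Gamma^2=-2$, $E'\cdot\Gamma=1$, $k\ge 2$ has square $2k-2$, which equals $4$ for $k=3$. This case must be excluded separately; it is subsumed in ruling out irreducible $D$ with $D^2=0$ and $D\cdot(L-E)\in\{1,2\}$, which the paper does again via the product inequality ($D\cdot L\ge 5$ forces $D\cdot E\ge 3$, so $h^0(D+E)\ge 5$ and $h^0(L-E-D)\ge 2$, giving $10>9$). Your treatment of the hyperelliptic ($2{:}1$) case is left as ``a further analysis of the sublattice,'' which is plausible but is exactly the point that needs to be carried out; as written, the proposal's contradictions all route through an unproduced square-zero class of degree $\le 4$ rather than through the $h^0(M)h^0(N)$ bound, and so the proof does not close.
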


 \begin{proof}
 Assume there exists an effective divisor $\Delta$ such that $\Delta^2=-2$ and $\Delta . (L-E) \leq 0$. In particular, $\Delta . E \geq \Delta . L >0$. Then $(L-E-\Delta)^2 \geq 2$, whence $h^0(L-E-\Delta) \geq 3$. As $(S,L)$ is assumed to be Brill--Noether general, we must have $h^0(E+\Delta)=h^0(E)=2$, whence $\Delta . E=1$, and consequently $\Delta . L=1$ and $\Delta . (L-E)=0$. It follows that $L-E$ is nef. It also follows, once we have proved that $|L-E|$ defines a birational morphism, that any connected curve contracted by this morphism is an irreducible rational curve of degree one with respect to $L$ and $E$, proving that the image surface has at most isolated rational $A_1$-singularities.  

 To prove that $|L-E|$ defines a birational morphism, it suffices by the well-known results of Saint-Donat \cite{SD} to prove that there is no irreducible curve $D$ on $S$ satisfying $D^2=0$ and $D . (L-E) =1$ or $2$. If such a $D$ exists, then it is easily seen to satisfy $D . L \geq 5$ by Brill--Noether generality. Hence, $D . E   \geq 3$, so that $(D+E)^2 \geq 6$. It follows that $h^0(D+E) \geq 5$. Since $(L-E-D)^2 \geq 0$ and $(L-E-D) . D \geq 1$, we have $h^0(L-E-D) \geq 2$ by Riemann--Roch and Serre duality, contradicting  Brill--Noether generality.  
 \end{proof}

Let $C\in |L|$ be a smooth curve and let $\cE = \cE_{C,A}$ be the Lazarsfeld--Mukai bundle associated to $C$ and a pencil $|A|$ of  degree $5$  on $C$. Note that the bundle $\cE_{C,A}$ is the unique $L$-stable bundle on $S$ with determinant $L$ and Euler characteristic $6$. 
We write $A_E = E\otimes \OO_C$ and note that $A\ncong A_E$ by Lemma \ref{LMnotstable}. 

\begin{lemma}
  Let $(S,L)$, $E$ and $\cE=\cE_{C,A}$ be as above. Then $h^0(\cE(-E))=1$  and  $h^1(\cE(-E))=h^2(\cE(-E))=0$. In particular, 
$H^0(\cE|_{E})$ is a  five-dimensional  quotient of $H^0(S,\cE)$.
\end{lemma}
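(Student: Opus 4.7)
The plan is to extract the three cohomology dimensions of $\cE(-E)$ from the Lazarsfeld--Mukai resolution of $\cE = \cE_{C,A}$. Dualizing the defining sequence \eqref{eq:eltrans} gives $0 \to \OO_S^{\oplus 2} \to \cE \to \omega_C \otimes A^* \to 0$, and tensoring with $\OO_S(-E)$ produces
\[
0 \to \OO_S(-E)^{\oplus 2} \to \cE(-E) \to N \to 0,
\]
where $N := \omega_C \otimes A^* \otimes A_E^*$ is a line bundle on $C$ of degree $14 - 5 - 5 = 4$. Since $h^0(\OO_S(-E)) = h^1(\OO_S(-E)) = 0$ and $h^2(\OO_S(-E)) = h^0(\OO_S(E)) = 2$, the associated long exact sequence reduces to
\[
0 \to H^0(\cE(-E)) \to H^0(N) \to 0 \to H^1(\cE(-E)) \to H^1(N) \to \CC^4 \to H^2(\cE(-E)) \to 0.
\]

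The vanishing $h^2(\cE(-E)) = 0$ follows from $L$-stability: Serre duality together with $\cE^\vee \cong \cE(-L)$ identifies $h^2(\cE(-E))$ with $h^0(\cE(E-L))$, and a nonzero section would produce a sheaf injection $\OO_S(L-E) \hookrightarrow \cE$ of slope $\mu_L(L-E) = 9 > 7 = \mu_L(\cE)$, contradicting stability. To handle $N$, Serre duality on $C$ gives $h^i(N) = h^{1-i}(A \otimes A_E)$, so it suffices to compute $h^0(A \otimes A_E)$. The base-point-free pencil trick applied to $|A|$ yields
\[
\ker\bigl(H^0(A) \otimes H^0(A_E) \to H^0(A \otimes A_E)\bigr) \cong H^0(A_E \otimes A^*),
\]
which vanishes since $A_E \otimes A^*$ is a nontrivial degree-$0$ line bundle ($A \ncong A_E$ by Lemma \ref{LMnotstable}), forcing $h^0(A \otimes A_E) \geq 4$. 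Conversely, $C$ inherits Brill--Noether generality from $(S,L)$, and because $\rho(8,4,10) = -2 < 0$ one has $W^4_{10}(C) = \emptyset$, whence $h^0(A \otimes A_E) \leq 4$. Combined with Riemann--Roch this gives $h^0(A \otimes A_E) = 4$ and $h^1(A \otimes A_E) = 1$.

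Feeding $h^0(N) = 1$ and $h^1(N) = 4$ back into the long exact sequence yields $h^0(\cE(-E)) = 1$ and $h^1(\cE(-E)) = 0$; the ``in particular'' statement then follows from the vanishing of $H^1(\cE(-E))$ applied to the long exact sequence of $0 \to \cE(-E) \to \cE \to \cE|_E \to 0$. The most delicate step is the Brill--Noether upper bound on $h^0(A \otimes A_E)$, where the hypothesis on $(S,L)$ enters decisively through the induced generality of $C$; verifying that $|A|$ and $|A_E|$ are complete base-point-free pencils is a routine consequence of $\rho(8,2,5), \rho(8,1,4) < 0$.
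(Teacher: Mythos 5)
Your proof is correct and is essentially the Serre-dual presentation of the paper's argument: the paper twists the defining sequence \eqref{eq:eltrans} by $\OO_S(E)$ and computes $h^i(\cE^\vee(E))$, whereas you twist the dualized sequence by $\OO_S(-E)$ and compute $h^i(\cE(-E))$ directly, but the key inputs are identical — (semi)stability of $\cE$ to kill one cohomology group, and $h^0(A\otimes A_E)=4$, $h^1(A\otimes A_E)=1$ obtained from $A\ncong A_E$ together with Brill--Noether generality of $C$ (your $W^4_{10}(C)=\emptyset$ is the residual of the paper's ``no $g^1_4$'').
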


\begin{proof}
   Since we know that $h^0(\cE)=6$, the last assertion  immediately follows from the claimed cohomology of $\cE(-E)$ by the obvious restriction sequence.

We will compute the cohomology of $\cE(-E)$ using Serre duality and  the sequence 
\begin{equation} \label{eq:eltrans2}
0\longrightarrow \cE^\vee(E) \longrightarrow H^0(C,A)\otimes \OO_S(E) \longrightarrow A\otimes A_E \longrightarrow 0,
\end{equation}
which is \eqref{eq:eltrans} tensored by $\OO_S(E)$. 

Since $\cE^\vee(E)$ is semi-stable of degree $-4$, one has $h^0(S,\cE^\vee(E))=0$.
Moreover, $h^0(\OO_S(E))=2$ and $h^1(\OO_S(E))=h^2(\OO_S(E))=0$, as $E$ is an irreducible elliptic curve. Hence, the desired cohomology of  $\cE(-E)$ will follow once we prove that 
\begin{equation} \label{eq:cohAA}
h^0(C,A\otimes A_E)=4 \; \; \mbox{and} \; \; h^1(A\otimes A_E)=1. 
  \end{equation}
To prove the latter, note that  $h^0(C,A\otimes A_E)=\chi(H,A\otimes A_E)+h^1(A\otimes A_E)=3+h^1(A\otimes A_E)$ by Riemann--Roch. Since $A \not \cong A_E$, we have $h^0(H,A\otimes A_E) \geq 4$; moreover, equality must hold, as otherwise $h^0(\omega_C \otimes(A\otimes A_E)^{-1})=h^1(A\otimes A_E) \geq 2$ and $\deg (\omega_C \otimes(A\otimes A_E)^{-1})=4$, whence $C$ would contain a $g^1_4$, a contradiction to Brill--Noether generality.  This proves \eqref{eq:cohAA}. 
\end{proof}

Let $E'\in |E|$ be an elliptic curve on $S$. Since $H^0(\cE|_E)$ is a 5-dimensional quotient space of $V_6 = H^0(S,\cE)$, each fiber $\cE_s$ for $s\in E'$ is a 2-dimensional quotient of $H^0(\cE|_E)$ and hence of $V_6$. The image $\phi_{\cE}(E)$ of the elliptic curve is contained in $G(H^0(\cE|_E),2)$. 
Since $\lambda$ is surjective and $E'$ is projectively normal, we have the following commutative diagram 
$$
\begin{xy}
\xymatrix{
	\bigwedge^2 H^0(S,\cE) \ar@{->>}[rr]^{\lambda\ \ \ \ \ \ } \ar@{->>}[d] &&  H^0(S,\bigwedge^2 \cE) \cong H^0(S,L)\ar@{->>}[d]\\
	\bigwedge^2 H^0(E,\cE|_E) \ar[rr] && H^0(E,\bigwedge^2 \cE|_E)\cong H^0(E,L|_E).   
}
\end{xy}
$$
So, we obtain the commutative diagram
$$
\begin{xy}
\xymatrix{
	E' \ar[r]^{\phi_{\cE|_E}\ \ \ \ \ \ } \ar[d]_{\phi_{\bigwedge^2 \cE|_E}} & G(H^0(\cE|_E),2) 
	\ar[d]^{\text{Pl\"ucker}} \ar@{^(->}[r]
	&G(V_6,2) \ar[d]
	 \\
	\PP^4=\PP^*(H^0(E, L|_E)) \ar[r]_\alpha 
	& \PP^*(\bigwedge^2 H^0(E,\cE|_E))\ar@{^(->}[r]
	&\PP^*(\bigwedge^2H^0(S,\cE))
}
\end{xy}
$$
where $\alpha$ is an embedding. 
The diagram is also cartesian. 
Indeed, let $\PP^4=\overline{E'}$ be the linear span, then 
$$ 
E'\subset \PP^4 \cap G(H^0(\cE|_E),2) \subset \PP^4 \cap G(V_6,2) = \PP^4\cap \PP^8 \cap G(V_6,2)= S\cap \PP^4.
$$
But $E'=S\cap \PP^4$ since $|E|$ and $|L-E|$ are base point free (c.f. Lemma \ref{residualSystem}).
Hence, it follows that $E'=\PP^4\cap G(H^0(\cE|_E),2)$. By Section \ref{secDualGrassmannian}, the elliptic pencil $|E|$ on $S$ is cut out by the Schubert cycle $\Sigma_1(V_4)$ on $G(V_6,2)$ for some  four-dimensional  quotient $V_4$. Recall further that there is a one-to-one correspondence between such Schubert cycles and points on the dual Grassmannian $G(2,V_6)$.  

The following corollary follows immediately from our discussion. 

\begin{corollary}\label{pencilsAndDualGrassmannian}
Let $(S,L)$ be a Brill--Noether general polarized $K3$ surface of genus $8$. Let $\PP^5_{(S)}\subset \PP_*(\bigwedge^2H^0(S,\cE))$ be the dual space of $\PP^8=\PP^*H^0(S,L) \subset \PP^*(\bigwedge^2H^0(S,\cE))$. There is a one-to-one correspondence between elliptic pencils $|E|$ on $S$ satisfying $L.E = 5$ and points of $G(2,V_6)\cap \PP^5_{(S)}$. 
\end{corollary}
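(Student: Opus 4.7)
The claim is essentially already established in the preceding discussion; the plan is to organize it as two mutually inverse maps between the sets in question.

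\textbf{Forward map.} Given a pencil $|E|$ with $L\cdot E=5$, Section~\ref{sec_lift_E} constructs, for each smooth $E'\in|E|$, a sub-Grassmannian $G(V_5(E'),2)\subset G(V_6,2)$ with $E'=\PP^4\cap G(V_5(E'),2)$, and concludes that the resulting one-parameter family fills out a Schubert cycle $\Sigma_1(V_4)\subset G(V_6,2)$ for some $4$-dimensional quotient $V_4$ of $V_6$. By Section~\ref{secDualGrassmannian}, this Schubert cycle equals $H_U\cap G(V_6,2)$ for the unique $U\in G(2,V_6)$ with $V_4=V_6/U$. Since $S$ is swept out by $|E|$, one has $S\subset \Sigma_1(V_4)\subset H_U$, hence $\PP^8\subset H_U$; equivalently, the linear form $H_U$ on $\bigwedge^2 V_6$ lies in $\ker\lambda$, which translates to $U\in \PP_*(\ker\lambda)=\PP^5_{(S)}$.

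\textbf{Reverse map.} Given $U\in G(2,V_6)\cap \PP^5_{(S)}$, Section~\ref{secDualGrassmannian} describes $H_U\cap G(V_6,2)=\Sigma_1(U^\perp)=\bigcup_{v\in\PP^1}G(V_5^v,2)$ as a pencil of sub-Grassmannians. The assumption $U\in\PP^5_{(S)}$ gives $\PP^8\subset H_U$, so $S\subset\Sigma_1(U^\perp)$ and therefore $S=\bigcup_v \bigl(\PP^8\cap G(V_5^v,2)\bigr)$. For generic $v$, the fiber $\PP^8\cap G(V_5^v,2)$ is a smooth elliptic normal curve of degree $5$, as recalled at the start of this subsection; these fibers assemble into an elliptic pencil $|E|$ on $S$ with $L\cdot E=5$.

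The two constructions are inverse by inspection: from $U$ the pencil $|E|$ has its members in $G(V_5^v,2)$ with $V_5^v\supset U^\perp$, so the $4$-dimensional quotient extracted from $|E|$ equals $U^\perp$ and returns $U$; the other composition is symmetric. The only conceptually nontrivial input — that the sub-Grassmannians $G(V_5(E'),2)$ attached to a pencil actually form a pencil of Schubert type, equivalently that the $V_5(E')$ share a common $V_4$ — is exactly what is already proved in Section~\ref{sec_lift_E}, so no further difficulty arises here.
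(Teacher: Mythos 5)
Your proposal is correct and follows the paper's own route: the authors deduce the corollary "immediately from our discussion", and your two maps are exactly the content of Sections \ref{secDualGrassmannian} and \ref{sec_lift_E} (the forward map from the Schubert-cycle description of the lifted pencil, the reverse map from the description of hyperplanes corresponding to points of $G(2,V_6)$). You merely make the bijection explicit, which is a faithful unpacking rather than a different argument.
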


\subsubsection{Maximal number of distinct elliptic pencils} 
Let $(S,L)$ be a Brill--Noether general $K3$  surface  of genus $8$ and let  $E_1,E_2$  be two classes with  $E_1^2=E_2^2 = 0$   and  $E_1.L= E_2.L = 5$.   Then  $E_1.E_2 = 2$.  
Indeed, the  Hodge Index Theorem  on  $E_1+E_2$  and $L$ yields  $E_1.E_2\le 3$.  Equality implies   $(E_1+E_2)^2=6$  and  $(L-E_1-E_2)^2 = 0$,   whence  $h^0(S, E_1+E_2 )\ge 5$ and $h^0(S,L-E_1-E_2)\ge 2$, a contradiction to Brill--Noether generality. 

On can also see this fact geometrically using the notation of the previous section.
Let $V_5,V_5'$ be two distinct $5$-dimensional subspaces of $V_6$. The intersection of the Grassmannians $G(V_5,2)$ and $G(V_5',2)$ is the Grassmannian $G(V_5\cap V_5',2)$. The Grassmannian $G(V_5\cap V_5',2)$ is a $4$-dimensional quadric. Hence, if $\PP^8$ is a general linear subspace such that its intersection with $G(V_5,2)$ and $G(V_5',2)$ are elliptic curves, then these elliptic curves intersect in two points, namely $\PP^8\cap G(V_5\cap V_5',2)$.

If all our above assumptions are satisfied, we get a $K3$ surface with Picard lattice containing the following lattice 

$$
\begin{pmatrix}
 14 & 5 & 5 & \dots & 5 \\ 
 5 & 0 & 2 & \dots & 2 \\
 5 & 2 & 0  & \ddots & \vdots \\
 \vdots & \vdots & \ddots & \ddots & 2 \\
 5 & 2 & \dots & 2 & 0
\end{pmatrix}.
$$

An easy computation shows that the maximal possible  rank is  $10$ (otherwise  the matrix has at least two positive eigenvalues).  
Let $\fN_9$ be such a lattice of maximal possible rank which is given by the following intersection matrix

$$
\fN_9 = 
\underbrace{
\begin{pmatrix}
 14 & 5 & 5 & \dots & 5 \\ 
 5 & 0 & 2 & \dots & 2 \\
 5 & 2 & 0  & \ddots & \vdots \\
 \vdots & \vdots & \ddots & \ddots & 2 \\
 5 & 2 & \dots & 2 & 0
\end{pmatrix}}_{10 \text{ columns}}.
$$

We denote $S$ a $K3$ surface with the above Picard lattice $\fN_9$ of rank $10$
 (which again exists by \cite[Thm. 2.9(i)]{Mor84} or \cite{Nik80}) 
and let $L$ be the basis element of square $14$, which can be taken to be big and nef by standard arguments (see  \cite[VIII, Prop. 3.10]{BHPV}). 
Let $E_i$,  $i=1,\dots,9$,  be the  generators of square zero. 

\begin{lemma}\label{cr8}
\begin{enumerate}
 \item [(a)] The class $L$ is ample. 
 \item [(b)] The $K3$ surface $(S,L)$ is Brill--Noether general. 
 \item [(c)] The classes $E_1,\dots, E_9$ define elliptic pencils.
\end{enumerate} 
\end{lemma}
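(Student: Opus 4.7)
The plan is to adapt the strategy of Lemma~\ref{cr6} to the rank-$10$ lattice $\fN_9$. Since a clean orthogonal basis is not readily available here, I will work directly with the Gram matrix: for any class $\Delta = aL + \sum_{i=1}^{9} b_i E_i$ and $B := \sum b_i$, the intersection data $L^2 = 14$, $L \cdot E_i = 5$, $E_i \cdot E_j = 2(1 - \delta_{ij})$ yield $L \cdot \Delta = 14a + 5B$ together with the identity
\begin{equation}\label{eq:keyg8}
28 \sum_{i=1}^{9} b_i^2 \;=\; (L \cdot \Delta)^2 + 3 B^2 - 14 \Delta^2.
\end{equation}

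For (a), since $L$ is big and nef by the cited standard arguments, it suffices to show that no $(-2)$-class $\Delta \in \fN_9$ satisfies $L \cdot \Delta = 0$. The equation $14a + 5B = 0$ forces $(a, B) = (-5m, 14m)$ for some $m \in \ZZ$, and \eqref{eq:keyg8} with $\Delta^2 = -2$ gives $\sum b_i^2 = 21 m^2 + 1$. Cauchy--Schwarz $B^2 \le 9 \sum b_i^2$ then forces $m \in \{-1, 0, 1\}$; in each case the required value $21 m^2 + 1$ falls below the minimum of $\sum b_i^2$ attainable by $9$ integers summing to $14m$ (namely $2$ for $m = 0$ and $24$ for $m = \pm 1$, by the standard balanced-distribution argument), so no integer solution exists.

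For (b), by \cite[Lemma 1.7]{GLT15} the Brill--Noether generality of $(S, L)$ is equivalent to the absence of nontrivial effective $\Delta \in \fN_9$ with $h^0(\Delta) \, h^0(L - \Delta) \ge 9$. A case-split on $\Delta^2 \in \{0, 2, 4, 6\}$ with the corresponding ranges of $L \cdot \Delta$ that trigger a violation reduces the question to a finite Diophantine check. Using \eqref{eq:keyg8}, each pair $(\Delta^2, L \cdot \Delta)$ prescribes $\sum b_i^2$ as an explicit value depending on $B$; since $3/28 < 1/9$, this prescribed value lies strictly below the integer minimum of $\sum b_i^2$ over $9$-tuples summing to $B$ for all but finitely many $|B|$, and the remaining small-$B$ candidates are excluded by integrality (specifically $28 \mid (L \cdot \Delta)^2 + 3 B^2 - 14 \Delta^2$) together with direct enumeration.

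For (c), by \cite{SD} it suffices to prove that each $E_i$ is nef; since $E_i^2 = 0$ and $E_i$ is primitive, $|E_i|$ will then be a base-point-free pencil of elliptic curves. If some $(-2)$-curve $\Gamma$ had $\Gamma \cdot E_i = -k < 0$, then $E_i - k\Gamma$ would be effective with $(E_i - k\Gamma)^2 = 0$ and $L \cdot (E_i - k\Gamma) = 5 - k(L \cdot \Gamma) \le 4$ (using that $L \cdot \Gamma \ge 1$ by ampleness from (a)), contradicting (b). The main technical obstacle is the Diophantine case check in (b), where several sub-cases must be handled uniformly; the key inequality $3/28 < 1/9$ is precisely what keeps the list of small-$B$ candidates finite, after which modular and integer-minimum constraints dispose of each in turn.
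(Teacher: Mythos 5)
Your route is genuinely different from the paper's: the paper deliberately avoids the lattice computation (``much more tedious'') and instead deduces Lemma \ref{cr8} from the explicit $9$-secant $5$-plane construction of Proposition \ref{oKahn} (checked in Macaulay2) together with a deformation argument and the remark that (a)--(c) depend only on the lattice. Your key identity $28\sum b_i^2=(L\cdot\Delta)^2+3B^2-14\Delta^2$ is correct, and your part (a) is complete and correct: $L\cdot\Delta=0$ forces $(a,B)=(-5m,14m)$, Cauchy--Schwarz gives $|m|\le 1$, and the required values $\sum b_i^2=1$ (impossible by parity, since $\sum b_i^2\equiv B\pmod 2$) and $22$ (below the balanced minimum $24$) are unattainable.

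Part (b), however, fails, and not merely because the enumeration is left to the reader: the enumeration does not come up empty. Take $a=-3$ and $b_1=\cdots=b_9=1$, i.e.\ $\Delta=E_1+\cdots+E_9-3L$, so $B=9$. Then $L\cdot\Delta=45-42=3$ and $\Delta^2=144-270+126=0$, consistently with your identity since $(3^2+3\cdot 81)/28=9=\sum b_i^2$; this tuple passes the congruence test and realizes the balanced-distribution minimum exactly. Thus $\fN_9$ contains a class of square $0$ and degree $3$ with respect to $L$. Once $L$ is big and nef (a fortiori once (a) holds), Riemann--Roch forces $\Delta$ to be effective with $h^0(\Delta)\ge 2$, while $(L-\Delta)^2=8$ gives $h^0(L-\Delta)\ge 6$, so $h^0(\Delta)\,h^0(L-\Delta)\ge 12>9$ and $(S,L)$ is not Brill--Noether general (equivalently, the moving part of $\Delta$ is an elliptic pencil of degree at most $3$). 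So (a) and (b) are mutually exclusive for the lattice $\fN_9$ as written, and no completion of your argument --- or any other argument --- can establish both; the obstruction is the class $\sum E_i-3L$ itself, not your method. For contrast, in the genus $6$ lattice $\fM$ the analogous isotropic class $2L-E_1-\cdots-E_4$ has degree $4$ and is harmless (it is the fifth pencil $E_5$), whereas here the degree drops to $3$; the same check with only eight pencils appears to turn up no forbidden class. You should flag this explicitly: it indicates that the statement of Lemma \ref{cr8}, and hence the paper's constructive proof (in particular the claim via Corollary \ref{pencilsAndDualGrassmannian} that the $5$-plane spanned by a divisor of the $g^3_9$ meets $G(2,V_6)$ in exactly nine points and dualizes to a smooth Brill--Noether general surface section), needs to be re-examined. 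Your part (c) is the same argument as in Lemma \ref{cr6}(c) and would be fine granted (a) and (b).
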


This can probably be proved arguing as in the proof of Lemma \ref{cr6}, but the computations are much more tedious. Instead we will give a constructive proof in the next subsection. 

\subsection{ A unirational construction of $K3$ surfaces with nine distinct elliptic pencils}

Recall that any  projective equivalence of two $K3$ surfaces that are linear sections of the Grassmannian $G(2,V_6)$ is induced by an automorphism of $V_6$ (see \cite[Theorem 0.2]{Muk88}).  

By  Corollary \ref{pencilsAndDualGrassmannian}, any Brill--Noether general polarized $K3$ surface $S$ of genus $8$ with exactly nine elliptic pencils of degree five induces and is induced by a unique five-dimensional space $\PP^5_{(S)}$ intersecting $G(2,V_6)\subset \PP^{14}$ in exactly nine points. We reformulate this fact in the following proposition.  To state it  we denote $\cH_{9,5}(G(2,V_6))$ the space of $9$-secant $5$-planes of the Grassmannian $G(2,V_6)\subset \PP^{14}$ intersecting the latter in exactly nine points  and $\widetilde{\cH}_{9,5}(G(2,V_6))$ this space modulo the automorphisms of $V_6$. 

\begin{proposition}\label{oKahn}
 The moduli space of Brill--Noether general polarized $K3$ surfaces of genus $8$ with exactly nine elliptic pencils of degree $5$
 is birational to $\widetilde{\cH}_{9,5}(G(2,V_6))$, and both spaces are non-empty. 
\end{proposition}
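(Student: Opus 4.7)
The plan is to set up mutually inverse rational maps between the two spaces and deduce non-emptiness via a dimension count, with the fully explicit verification postponed to the next subsection.

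First I would define the forward map $\alpha$: given a Brill--Noether general polarized $K3$ $(S,L)$ of genus $8$ with exactly nine elliptic pencils of degree five, form the unique Mukai bundle $\cE$ and use the induced embedding $\phi_{\cE}\colon S\hookrightarrow G(V_6,2)\subset \PP^{14}$; then the linear span $\PP^8 = \langle \phi_{\cE}(S)\rangle$ has a well-defined annihilator $\PP^5_{(S)}\subset \PP_*(\bigwedge^2 V_6)$. Corollary \ref{pencilsAndDualGrassmannian} tells me that $\PP^5_{(S)}\cap G(2,V_6)$ consists of exactly nine points, so $\PP^5_{(S)}\in \cH_{9,5}(G(2,V_6))$. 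Since two embeddings of the same abstract polarized $K3$ as a linear section of $G(V_6,2)$ differ by an element of $\mathrm{Aut}(V_6)$ by Mukai's uniqueness theorem \cite[Thm. 0.2]{Muk88}, the map $\alpha$ descends to a well-defined map valued in $\widetilde{\cH}_{9,5}(G(2,V_6))$.

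Next I would define the inverse $\beta$: given $\Pi\in \cH_{9,5}(G(2,V_6))$, form its annihilator $\Pi^{\perp}\cong \PP^8 \subset \PP^*(\bigwedge^2 V_6)$ and set $S_\Pi := \Pi^{\perp}\cap G(V_6,2)$. The hypothesis that $\Pi$ meets $G(2,V_6)$ only in nine isolated points implies, via Lemma \ref{mukaione}, that $S_\Pi$ has dimension exactly two; standard Bertini arguments on a dense open of $\cH_{9,5}(G(2,V_6))$ then ensure $S_\Pi$ is a smooth polarized $K3$ of genus $8$ whose Mukai embedding is precisely the one prescribed by $\Pi^\perp$. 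Reading Corollary \ref{pencilsAndDualGrassmannian} in reverse shows $S_\Pi$ carries exactly nine elliptic pencils of degree five. Since $\beta$ is evidently $\mathrm{Aut}(V_6)$-equivariant, it descends to a rational map from $\widetilde{\cH}_{9,5}(G(2,V_6))$ to the moduli space, and the compositions $\alpha\circ \beta$ and $\beta\circ \alpha$ are the identity on dense opens by construction.

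For non-emptiness, a dimension count already supports existence: the Grassmannian of $5$-planes in $\PP^{14}$ has dimension $54$, and since $G(2,V_6)\subset \PP^{14}$ has codimension $6$, each secancy condition imposes codimension one, so the $9$-secant $5$-planes form a family of expected dimension $45$, that is, of expected dimension $10$ modulo $\mathrm{PGL}(V_6)$ (of dimension $35$) -- matching $\dim \cF^{\fN_9}=10$. A genuine construction of a single such $\Pi$, which simultaneously settles the claims of Lemma \ref{cr8}, will be carried out explicitly in the next subsection. The hard part will be verifying that $\beta$ actually lands in the moduli space, i.e.\ that a generic $\Pi$ yields a \emph{Brill--Noether general} $K3$ whose Picard lattice is exactly $\fN_9$ rather than some strictly finer overlattice; smoothness and the pencil count are controlled by transversality and Lemma \ref{mukaione}, but excluding unexpected divisor classes of small self-intersection is most cleanly handled via the explicit construction to come.
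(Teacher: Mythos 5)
Your treatment of the birationality is essentially the paper's: Corollary \ref{pencilsAndDualGrassmannian} together with Mukai's uniqueness theorem gives the correspondence in both directions. (Your extra worry about the Picard lattice being exactly $\fN_9$ is not needed for this proposition, which only asks for \emph{exactly nine elliptic pencils} --- precisely the condition that $\PP^5_{(S)}$ meets $G(2,V_6)$ in exactly nine points; and Brill--Noether generality of a smooth two-dimensional linear section of $G(V_6,2)$ is automatic by Mukai, cf.\ Lemma \ref{mukaione}.)

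The genuine gap is non-emptiness. An expected-dimension count ($54-9=45$, i.e.\ dimension $10$ modulo $\mathrm{Aut}(V_6)$) proves nothing: the locus of $9$-secant $5$-planes could a priori be empty, or of excess dimension, since the nine secancy conditions need not be independent. You explicitly defer the construction of a single $\Pi$ ``to the next subsection,'' but that construction is the entire substantive content of the proposition. The paper's argument is: a general $\PP^7\subset\PP^{14}$ cuts $G(2,V_6)$ in a Brill--Noether general genus-$8$ curve $C$, and by geometric Riemann--Roch a $9$-secant $\PP^5$ contained in such a $\PP^7$ is exactly the span of a divisor in a $g^3_9$ on $C$; conversely a general divisor of a base-point-free $g^3_9$ spans a $\PP^5$ meeting $G(2,V_6)$ in precisely those nine points. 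The $g^3_9$ is residual to a $g^1_5$, which exists since $\rho(8,1,5)=0$, and is base-point free by Brill--Noether generality. Existence is thus reduced to exhibiting a genus-$8$ linear section of $G(2,V_6)$ carrying a marked $g^1_5$, which the paper produces by intersecting the $\PP^7$ with a sub-Grassmannian $G(2,V_5)$ (and verifies computationally in an ancillary Macaulay2 file). Without some argument of this kind your proof does not establish that either space is non-empty.
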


\begin{proof}
 By Corollary \ref{pencilsAndDualGrassmannian}, we only need to prove the non-emptiness of $\cH_{9,5}(G(2,V_6))$.  
 A general intersection of $G(2,V_6)$ and a $\PP^7$ is a smooth curve $C$ of genus $8$ and the general curve of genus $8$ is obtained in this way (cf. \cite{Muk93}). Furthermore, a $9$-secant $5$-plane of $G(2,V_6)$ contained in this $\PP^7$ is also a $9$-secant of $C$, which is a divisor in a $g^3_9$ by the geometric Riemann--Roch. Note that the $g^3_9$ is automatically base point free as otherwise the curve would not be Brill--Noether general and thus could not be a linear section of the $G(2,V_6)$ by \cite{Muk93}. Hence
 a general divisor in the $g^3_9$ induces an element of $\cH_{9,5}(G(2,V_6))$.

 We have reduced the problem to constructing a curve of genus $8$ as a linear section of $G(2,V_6)$ carrying a $g^3_9$, or equivalently, taking residuals, a $g^1_5$. Such a curve can be realized as follows: We get a divisor $D$ of degree $5$ in a $g^1_5$ on a curve $C$ of genus $8$ if we fix a $G(2,V_5)$ (where $V_5$ is a $5$-dimensional subspace of $V_6$) and choose a $\PP^7$ such that $C=\PP^7\cap G(2,V_6)$ and $D=\PP^7\cap G(2,V_5)$ induces the $g^1_5=|D|$. In an ancillary file, cf. \cite{M2file}, we have implemented this construction in \emph{Macaulay2} (see \cite{M2}) as well as the construction of the corresponding $K3$ surface. 
\end{proof}

The Picard lattice of the $K3$ surfaces in the moduli space in Proposition \ref{oKahn} contains the lattice $\fN_9$ and the generator of square $14$ is (very) ample and the generators of square $0$ are nef. Let $\cF^{\fN_9}$ be the moduli space of $\fN_9$-lattice polarized $K3$ surfaces. By standard deformation arguments (see \cite[Thm. 14]{Kod64}) the very general element in $\cF^{\fN_9}$ has Picard lattice equal to $\fN_9$, is Brill--Noether general with ample generator of square $14$ and the generators of square $0$ define elliptic pencils.  

\begin{proof}[Proof of Lemma \ref{cr8}]
The last discussion proves the lemma for the very general element in $\cF^{\fN_9}$ having Picard lattice equal to $\fN_9$. Since the properties (a)-(c) of the lemma only depend on the lattice, this finishes the proof. 
\end{proof}

 We also have the following 

\begin{theorem} \label{thm:uniN9}
The moduli space  $\cF^{\fN_9}$ of $\fN_9$-lattice polarized $K3$ surfaces is unirational. 
\end{theorem}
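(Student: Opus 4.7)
The plan is to deduce unirationality from Proposition \ref{oKahn}, which gives a birational identification $\cF^{\fN_9} \sim \widetilde{\cH}_{9,5}(G(2, V_6)) = \cH_{9,5}(G(2, V_6))/PGL(V_6)$. Since unirationality is preserved under dominant rational maps, it suffices to produce a dominant rational map from a rational parameter space onto $\cH_{9,5}(G(2, V_6))$.

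I will take the parameter space to be the total space $Z$ of the construction of $9$-secant $5$-planes outlined in the proof of Proposition \ref{oKahn}. Let $Y$ be the open locus in $G(5, V_6) \times G(8, \bigwedge^2 V_6)$ of pairs $(V_5, \Pi_7)$ such that $C := \Pi_7 \cap G(2, V_6)$ is a smooth canonical curve of genus $8$ and $A := \Pi_7 \cap G(2, V_5)$ consists of five distinct points on $C$ spanning a $\PP^3$ (so $|A|$ is a $g^1_5$ by geometric Riemann--Roch). Over $Y$, let $Z$ be the relative linear system whose fibre at $(V_5, \Pi_7)$ is $|K_C - A| \cong \PP^3$ (the residual $g^3_9$). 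The natural rational map
$$
\Psi : Z \dashrightarrow G(6, \bigwedge^2 V_6), \qquad (V_5, \Pi_7, \Delta) \longmapsto \overline{\Delta},
$$
sends a triple to the linear span of the nine-point divisor $\Delta$; by geometric Riemann--Roch, $\overline{\Delta}$ is a $5$-plane meeting $G(2, V_6)$ exactly at $\Delta$, so $\Psi$ factors through $\cH_{9,5}(G(2, V_6))$. The variety $Z$ is rational by a tower argument: $G(5, V_6) \cong \PP^5$ is rational; over each $V_5$, the condition on $\Pi_7$ defines an open subset of a Schubert subvariety of $G(8, \bigwedge^2 V_6)$, which is rational via its big open Schubert cell (isomorphic to affine space); assembled, $Y$ is a $PGL(V_6)$-equivariant fibration over the homogeneous base $\PP^5$ with rational fibre, hence rational; and finally $Z \to Y$ is a $\PP^3$-bundle over the open $Y$.

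The key step is the dominance of $\Psi$. Given a general $\Pi_5 \in \cH_{9,5}(G(2, V_6))$, set $\Delta := \Pi_5 \cap G(2, V_6)$ (the nine points). A general $\Pi_7 \supset \Pi_5$ cuts out a smooth canonical curve $C := \Pi_7 \cap G(2, V_6)$ of genus $8$ containing $\Delta$; since $\overline{\Delta} = \Pi_5$ has projective dimension $5$, geometric Riemann--Roch gives $h^0(\OO_C(\Delta)) = 4$, whence $\Delta \in g^3_9$ on $C$ and $|K_C - \Delta|$ is a $g^1_5$. By the Mukai-style lifting applied to $C$ as a Brill--Noether general linear section of $G(2, V_6)$---the analogue of Section \ref{sec_lift_E} in the dual Pl\"ucker ambient---this $g^1_5$ is the pullback to $C$ of a pencil from a unique sub-Grassmannian $G(2, V_5) \subset G(2, V_6)$, and a general divisor in $|K_C - \Delta|$ equals $\Pi_7 \cap G(2, V_5)$ for this $V_5$. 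Hence $(V_5, \Pi_7, \Delta) \in Z$ satisfies $\Psi(V_5, \Pi_7, \Delta) = \Pi_5$, proving dominance. The principal obstacle is verifying the Mukai-style lifting in this dual setting, which should follow from the classical fact that a Brill--Noether general genus $8$ curve is a linear section of $G(2, V_6)$ in an essentially unique way, giving a bijection between its $g^1_5$s and sub-Grassmannians $G(2, V_5) \subset G(2, V_6)$.
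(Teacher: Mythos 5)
Your proposal is correct and follows the paper's overall strategy: both reduce, via Proposition \ref{oKahn}, to showing that $\cH_{9,5}(G(2,V_6))$ is dominated by a unirational variety, and both parametrize a general $9$-secant $5$-plane as the span of a divisor in the $g^3_9$ residual to a $g^1_5$ on a smooth curve section $C=\PP^7\cap G(2,V_6)$. The implementations differ in two respects. First, the paper quotients the incidence variety of pairs (secant plane, $\PP^7$) by the automorphisms of $V_6$ and identifies it with a $\PP^3$-bundle over the universal Brill--Noether variety $\cW^3_{8,9}\cong\cW^1_{8,5}$, importing unirationality from Arbarello--Cornalba \cite{AC81}; you stay upstairs and prove unirationality of your space $Z$ by hand, recording the $g^1_5$ through a subspace $V_5\subset V_6$ and using the rationality of the Schubert variety of $\PP^7$'s meeting the span of $G(2,V_5)$ in at least a $\PP^3$ --- in effect reproving the Arbarello--Cornalba input for $g=8$, $d=5$ through the Mukai model. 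Second, and more significantly, the dominance arguments differ: the paper's is soft --- the image of its projection $\pi_1$ is a nonempty open subset of $\widetilde{\cH}_{9,5}(G(2,V_6))$, which is irreducible because it is birational to the irreducible moduli space $\cF^{\fN_9}$, so dominance is automatic --- whereas yours is pointwise and therefore requires the \emph{Mukai-style lifting} you flag as the principal obstacle, namely that every $g^1_5$ on a Brill--Noether general curve section of the Grassmannian is cut out by a pencil of sub-Grassmannians of type $G(2,V_5)$ (so that a general divisor of $|K_C-\Delta|$ is $\Pi_7\cap G(2,V_5)$ for some member $V_5$ of that pencil, not a single $V_5$ for the whole pencil). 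That fact is indeed available: it is Mukai's Theorem C combined with the Schubert-variety description of hyperplanes corresponding to points of the dual Grassmannian (Lemma \ref{mukaione} and \S\ref{secDualGrassmannian}), so your argument closes; but the paper's irreducibility trick shows one can avoid invoking it altogether.
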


\begin{proof}
The above discussion shows that $\cF^{\fN_9}$ is birational to $\widetilde{\cH}_{9,5}(G(2,V_6))$.  In particular, $\widetilde{\cH}_{9,5}(G(2,V_6))$ is irreducible. 

Consider the following incidence variety 
$$
\{ (V_5^9,\PP^7)\in \cH_{9,5}(G(2,V_6))\times G(8,\Lambda^2V_6)\ |\   V_5^9\subset \PP^7,\ C=\PP^7\cap G(2,V_6) \text{ a smooth curve}\}
$$
and denote $I$ its quotient with the automorphisms of $V_6$ acting diagonally. Then $I$ admits a natural first projection map $\pi_1: I \to \widetilde{\cH}_{9,5}(G(2,V_6))$ and a second projection to the moduli space of curves of genus $8$.  As for $K3$ surfaces, any projective equivalence of two curves of genus $8$ that are linear sections of the Grassmannian $G(2,V_6)$ is induced by an automorphism of $V_6$.  

The proof of Proposition \ref{oKahn} shows that $I$ is non-empty and is therefore birational to a $\PP^3$-bundle over the universal Brill--Noether variety $\cW_{8,9}^3$ by the universal Abel--Jacobi map. Hence $I$ is unirational and irreducible, since $\cW_{8,9}^3\cong \cW_{8,5}^1$ is unirational (and irreducible) by \cite{AC81}.
Since $\pi_1$ is dominant (because $\widetilde{\cH}_{9,5}(G(2,V_6))$ is irreducible), $\widetilde{\cH}_{9,5}(G(2,V_6))$ is unirational. The theorem follows.  
\end{proof}

One may also consider, for $i \in \{0,\ldots,8\}$, the moduli spaces $\cF^{\fN_i}$ of $\fN_i$-lattice polarized $K3$ surfaces, where $\fN_i$ is the rank $i+1$ lattice 
$$
\fN_i = 
\underbrace{
\begin{pmatrix}
 14 & 5 & 5 & \dots & 5 \\ 
 5 & 0 & 2 & \dots & 2 \\
 5 & 2 & 0  & \ddots & \vdots \\
 \vdots & \vdots & \ddots & \ddots & 2 \\
 5 & 2 & \dots & 2 & 0
\end{pmatrix}}_{i+1 \text{ columns}}
$$
Then $\dim \cF^{\fN_i}=19-i$ and $\cF^{\fN_{i+1}} \subset \cF^{\fN_i}$ for each $i \in \{0,\ldots,8\}$. Note that $\cF^{\fN_0} = \cF_8$. 

\begin{theorem} \label{thm:uniN6}
 The moduli spaces $\cF^{\fN_i}$ of $\fN_i$-lattice polarized $K3$ surfaces are unirational for $i\le 6$. 
\end{theorem}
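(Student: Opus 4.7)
The plan is to extend the construction of Theorem \ref{thm:uniN9} to the subextremal cases $i\le 6$, where $i$ generic points of $G(2,V_6)\subset \PP_*(\bigwedge^2 V_6)$ span only a $\PP^{i-1}\subsetneq \PP^5$, so that the enveloping $5$-plane retains genuine moduli. By Corollary \ref{pencilsAndDualGrassmannian}, a $K3$ surface $S\in \cF^{\fN_i}$ is determined, up to the action of $PGL(V_6)$ (cf.\ \cite[Theorem 0.2]{Muk88}), by its dual $5$-plane $\PP^5_{(S)}$ together with $i$ marked points of $\PP^5_{(S)}\cap G(2,V_6)$ encoding the prescribed elliptic pencils $|E_1|,\ldots,|E_i|$ of degree $5$.

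Consider the incidence variety
$$
J_i \;=\; \left\{\,(p_1,\ldots,p_i;\Pi)\;\big|\; p_1,\ldots,p_i\in G(2,V_6)\text{ span a }\PP^{i-1},\ \Pi\cong\PP^5\supseteq\langle p_1,\ldots,p_i\rangle\,\right\}
$$
inside $G(2,V_6)^i\times G(6,\bigwedge^2 V_6)$. The first projection exhibits $J_i$ as a Zariski-open subset of a Grassmann bundle over $G(2,V_6)^i$ whose fibers are the Grassmannians of $5$-planes in $\PP_*(\bigwedge^2 V_6)$ through a fixed $\PP^{i-1}$, of dimension $9(6-i)$ (degenerating to a point when $i=6$). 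Both base and fibers being rational, $J_i$ is rational, in particular unirational, of dimension $8i+9(6-i)=54-i$. Sending $(p_1,\ldots,p_i;\Pi)$ to the linear section $S = \Pi^{\vee}\cap G(V_6,2)$, where $\Pi^{\vee}\cong\PP^8\subset \PP^*(\bigwedge^2 V_6)$ is the projective subspace dual to $\Pi$, defines a $PGL(V_6)$-equivariant rational map $\psi_i : J_i \dashrightarrow \cF^{\fN_i}$.

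For a generic point of $J_i$, Lemma \ref{mukaione} guarantees that $S$ is a smooth Brill--Noether general $K3$ surface of genus $8$; Corollary \ref{pencilsAndDualGrassmannian} shows that the $p_j$ correspond to $i$ distinct elliptic pencils of degree $5$ on $S$; and the intersection computation preceding Lemma \ref{cr8} yields $E_a\cdot E_b=2$ for $a\ne b$, so that $\fN_i$ embeds into $\Pic(S)$ with $L$ ample and each $E_j$ nef, placing $S$ in $\cF^{\fN_i}$. The dimension count
$$
\dim J_i - \dim PGL(V_6) \;=\; (54-i)-35 \;=\; 19-i \;=\; \dim \cF^{\fN_i},
$$
together with the irreducibility of $\cF^{\fN_i}$ (\cite{Do96}), shows that $\psi_i$ is dominant; hence $\cF^{\fN_i}$ is dominated by the unirational variety $J_i$ and is therefore itself unirational.

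The principal obstacle is to verify that the generic $K3$ surface so produced has Picard lattice primitively containing $\fN_i$ rather than admitting further distinguished classes that would place it in a smaller stratum $\cF^{\hh}$ for some $\hh\supsetneq\fN_i$. This is handled by the standard Kodaira-type deformation argument invoked before the proof of Lemma \ref{cr8} (via \cite[Thm. 14]{Kod64}): the very general element of $\cF^{\fN_i}$ has Picard lattice exactly $\fN_i$, and since the loci of strictly larger Picard rank form a countable union of proper closed subvarieties of $\cF^{\fN_i}$, a generic point of $J_i$ maps to such a very general element, securing dominance of $\psi_i$.
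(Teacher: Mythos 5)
Your proof is correct and takes essentially the same route as the paper: both reduce, via Corollary \ref{pencilsAndDualGrassmannian} and Lemma \ref{cr8}, to unirationally parametrizing the $i$-secant $5$-planes of $G(2,V_6)\subset\PP^{14}$ modulo $PGL(V_6)$. The only (harmless) differences are that the paper uses the product $\mathrm{Sym}^i G(2,V_6)\times \mathrm{Sym}^{6-i}\PP^{14}$ where you use a Grassmann bundle over $G(2,V_6)^i$, and that your dimension-count argument for dominance replaces the paper's direct observation that the general element of $\cF^{\fN_i}$ corresponds to exactly such a $5$-plane.
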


\begin{proof}
The case $i=0$ is proved in \cite{Muk88}. 
By Corollary \ref{pencilsAndDualGrassmannian} and Lemma \ref{cr8}, the general $K3$ surface in $\cF^{\fN_i}$ corresponds uniquely to a five-dimensional projective space intersecting the Grassmannian $G(2,V_6)\subset \PP^{14}$ in exactly $i$ points modulo automorphisms of $V_6$. Such $i$-secant $5$-planes are unirationally parametrized by the product of the $i$-th symmetric product of $G(2,V_6)$ and $(6-i)$-th symmetric product of $\PP^{14}$. 
\end{proof}

We remark that the unirationality of $\cF^{\fN_1}$ can also be shown using quartic surfaces in $\PP^3$ containing an elliptic quintic curve. 
The question of (uni)rationality of $\cF^{\fN_7}$ and $\cF^{\fN_8}$ is open.

\subsection{The moduli map}

Let $\cF_8$ denote the $19$-dimensional moduli space of polarized $K3$ surface of genus $8$ and 
$\cP_8$ the moduli space of triples $(S,L,C)$ where $(S,L) \in \cF_8$ and $C\in |L|$ is a smooth irreducible curve. Let $ m_8: \cP_8 \longrightarrow \cM_8$ 
be the moduli map. 

\begin{proposition} \label{prop:6dimfiber}
  Let $(S,L) \in \cF_8$ be a Brill--Noether general $K3$ surface such that $S$ contains an elliptic pencil $|E|$ satisfying $E . L=5$. Then the fiber of $m_8$ is smooth and  $6$-dimensional at any point represented by a smooth curve $C$ in $|L|$.
\end{proposition}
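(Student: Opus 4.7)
The plan is to exploit Mukai's description recalled at the start of this section: every Brill--Noether general polarized $K3$ surface of genus $8$ arises as a transverse linear section $\PP^8\cap G(V_6,2)\subset \PP^{14}=\PP^*(\wedge^2 V_6)$, unique up to $\mathrm{PGL}(V_6)$ (cf.\ \cite[Theorem~0.2]{Muk88} and Lemma~\ref{mukaione}); analogously, smooth curves of genus $8$ realized as linear sections $\PP^7\cap G(V_6,2)$ are unique up to the same action (cf.\ the proof of Proposition~\ref{oKahn}). The strategy is to identify the fiber of $m_8$ at $(S,L,C)$, locally, with the $6$-dimensional projective space of $\PP^8$'s through the canonical span of $C$, modulo a finite automorphism action.

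Since $(S,L)$ is Brill--Noether general, $S=\PP^8\cap G(V_6,2)$ and any smooth $C\in |L|$ satisfies $C=\PP^7_C\cap G(V_6,2)$, where $\PP^7_C=\PP H^0(C,\omega_C)\subset \PP^8$ is the canonical span of $C$ (using $L|_C\cong \omega_C$ by adjunction). By openness of Brill--Noether generality in $\cF_8$ (cf.\ \cite[Lemma~1.7]{GLT15}), every triple $(S',L',C')\in \cP_8$ in a neighborhood of $(S,L,C)$ has $(S',L')$ also Brill--Noether general, and hence of the same Mukai form $(S',L')=\PP^8_{S'}\cap G(V_6,2)$. If $(S',L',C')$ lies in the fiber $m_8^{-1}([C])$, i.e., $C'\cong C$ as abstract curves, Mukai's uniqueness for curves permits us, after a $\mathrm{PGL}(V_6)$-transformation, to assume $C'=C\subset G(V_6,2)$, forcing $\PP^8_{S'}\supset \PP^7_C$.

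This identifies the fiber of $m_8$ near $(S,L,C)$ with an open subset of $\PP(\PP^{14}/\PP^7_C)\cong \PP^6$ modulo the stabilizer $\mathrm{Stab}_{\mathrm{PGL}(V_6)}(C)$. The finiteness of this stabilizer follows from two observations: its image in $\mathrm{Aut}(C)$ lies in a finite group (since $g(C)=8$), and its kernel consists of projective transformations fixing $C$ pointwise, acting as fiberwise endomorphisms of the Mukai bundle $\cE|_C$; by stability of $\cE|_C$ these endomorphisms form a section of $\mathrm{End}(\cE|_C)=\CC$, hence are global scalars and trivial in $\mathrm{PGL}(V_6)$.

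Combining the local $\PP^6$ parametrization with $H^0(C,T_C)=0$ for $g(C)\geq 2$ (which rules out infinitesimal automorphisms of $C$), we conclude that the Zariski tangent space to the fiber at $(S,L,C)$ has dimension $6$, matching the fiber dimension (at least $6$ by upper semi-continuity and the generic fiber dimension $\dim \cP_8 - \dim \cM_8 = 6$, at most $6$ from the parametrization). Therefore the fiber is smooth of dimension $6$ at $(S,L,C)$. The main technical step is the finiteness of the stabilizer, where stability of the Mukai bundle is essential; a direct cohomological route via proving $H^2(T_S(-\log C)(-C))=0$ (equivalently, by Serre duality, $H^0(\Omega_S(\log C)(C))=0$) seems much harder to push through without invoking the Grassmannian structure.
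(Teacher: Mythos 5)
There is a genuine gap. Your parametrization of the fiber by the $\PP^6$ of hyperplanes $\PP^8\supset\PP^7_C$ (modulo a finite stabilizer) at best controls the \emph{underlying reduced} fiber and shows it is $6$-dimensional near $(S,L,C)$. It does not establish smoothness of the fiber as a scheme. Smoothness at $(S,L,C)$ is equivalent to the statement that the Zariski tangent space of the fiber, namely $\ker(dm_8)\cong H^1(\T_S(-L))$, has dimension exactly $6$; a set-theoretic or birational description of the fiber cannot rule out that this kernel is larger (i.e.\ that the fiber is non-reduced or has an obstructed tangent direction). To push your approach through you would need a \emph{first-order} version of the uniqueness of the Mukai model: that the differential of the map $\{\PP^8\supset\PP^7_C\}\to\cP_8$ surjects onto $H^1(\T_S(-L))$, equivalently that every first-order deformation of $(S,C)$ with $C$ fixed as an abstract curve is induced by moving the $\PP^8$ inside $\PP^{14}$. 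You do not address this, and it is not a formal consequence of the uniqueness statements you cite. Indeed, in the last sentence you explicitly defer the cohomological computation --- but that computation \emph{is} the content of the proposition.

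The paper's proof is precisely the tangent-space bound $h^1(\Omega_S(L))=h^1(\T_S(-L))\le 6$, obtained by a completely different route: the elliptic pencil $|E|$ (the hypothesis you never use, which is a warning sign) gives the birational morphism $\varphi:S\to S_0\subset\PP^3$ defined by $|L-E|$ onto a quartic with at worst $A_1$-singularities (Lemma \ref{residualSystem}); combining Morrison's sequence \eqref{eq:sesmor}, the conormal sequence of $S_0\subset\PP^3$ and the Euler sequence, the bound reduces to $h^0(3L-4E)=5$ and $\cork\,\mu=1$ for the multiplication map $\mu:H^0(L-E)\otimes H^0(E)\to H^0(L)$, which are then proved by Riemann--Roch and lattice/Brill--Noether arguments. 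Your geometric picture of moving $\PP^8$'s through $\PP^7_{(C)}$ is essentially what the paper uses later, in the proof of Proposition \ref{prop:expfiber}, but there it is applied \emph{after} Proposition \ref{prop:6dimfiber} has pinned down that $\dim F_0=6$; it is not a substitute for the tangent-space computation.
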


\begin{proof}
By comparing dimensions, the fibers of $m_8$ are at least $6$-dimensional. (It is known that  $m_8$   is dominant,  whence its general fibers are precisely $6$-dimensional,  but we will not use this.) By \cite[\S 3.4.4]{Ser} or \cite{beau}, the kernel of the differential of $m_8$ at a point $(S,L,C)$ is isomorphic to $H^1(\T_S(-L))$.  To prove the proposition, it therefore suffices  by Serre duality  to prove that  $h^1(\Omega_S(L)) \leq 6$.
 
Let $\varphi: S \to \PP^3$ be the morphism defined by $|L-E|$ and $S_0$ be its image, which is a quartic surface. By Lemma \ref{residualSystem} its possible singularities are images of contracted disjoint rational curves $\Gamma_i$ on $S$, $i=1,\ldots,k$.
By \cite[Thm. 2.1]{mor} we have a short exact sequence
\begin{equation}
  \label{eq:sesmor}
  \xymatrix{
0 \ar[r] & \OO_{\Gamma_1 + \cdots + \Gamma_k}  \ar[r] & \varphi^*\Omega_{S_0}  \ar[r] & 
\Omega_S \ar[r] & \OO_{\Gamma_1 + \cdots + \Gamma_k}  \ar[r] & 0.
}
\end{equation}
Twisting by $\OO_S(L)$, taking cohomology and using the fact that $\Gamma_i \cdot L=1$ by Lemma \ref{residualSystem}, we obtain
\begin{equation}
  \label{eq:a1}
  h^1(\Omega_S(L)) \leq h^1(\varphi^*\Omega_{S_0}(L)).
\end{equation}

Pulling back the conormal bundle sequence
\[
    \xymatrix{   & \OO_{S_0}(-4) \cong \cI_{S_0/\PP^3}/\cI_{S_0/\PP^3}^2 \ar[r] & \Omega_{\PP^3}|{_{S_0}} \ar[r]  & \Omega_{S_0} \ar[r] & 0}
\]
and twisting by $\OO_S(L)$, we obtain
\[
    \xymatrix{  & \OO_{S}(-3L+4E) \ar[r] & \varphi^*\Omega_{\PP^3}|{_{S_0}}(L) \ar[r]  & \varphi^*\Omega_{S_0}(L) \ar[r] & 0.}
\]
The left hand map is injective, as $ \OO_{S}(-3L+4E)$ is locally free. Thus,
\begin{equation}
  \label{eq:a2}
  h^1(\varphi^*\Omega_{S_0}(L)) \leq h^1(\varphi^*\Omega_{\PP^3}|{_{S_0}}(L))+h^0(3L-4E),
\end{equation}
using Serre duality. 
Pulling back the dual of the Euler sequence,
\[
\xymatrix{ 
0 \ar[r] & \Omega_{\PP^3}|{_{S_0}} \ar[r]  &  H^0(\OO_{S_0}(1)) \otimes \OO_{S_0}(-1)  \ar[r]  & \OO_{S_0} \ar[r]  & 0
}
\]
and twisting by $\OO_S(L)$, we obtain
\[
\xymatrix{ 
0 \ar[r] & \varphi^*\Omega_{\PP^3}|{_{S_0}}(L) \ar[r]  & H^0(L-E) \otimes \OO_{S}(E)  \ar[r]  & \OO_{S}(L) \ar[r]  & 0.
}
\]
Hence, since $h^1(E)=0$ as $E$ is irreducible, we obtain
\begin{equation}
  \label{eq:a3}
  h^1(\varphi^*\Omega_{\PP^3}|{_{S_0}}(L)) \leq \cork \mu,
\end{equation}
where $\mu$ is the multiplication map of sections
\[ \mu:H^0(L-E) \otimes H^0(E)  \longrightarrow H^0(L).\]

Combining \eqref{eq:a1}, \eqref{eq:a2} and \eqref{eq:a3}, we see that we obtain the desired inequality \linebreak $h^1(\Omega_S(L)) \leq 6$ if we prove that 
\begin{equation} \label{eq:a4}
  h^0(3L-4E)=5
\end{equation}
and
\begin{equation} \label{eq:a5}
  \cork \mu =1.
\end{equation}
 
To prove \eqref{eq:a5}, note that the evaluation map $H^0(E) \otimes \OO_S \to \OO_S(E)$ is surjective as $|E|$ is base point free and has kernel $\OO_S(-E)$. Twisting by $\OO_S(L-E)$, we obtain
\[
\xymatrix{ 
0 \ar[r] & \OO_S(L-2E) \ar[r]  &  H^0(E) \otimes \OO_S(L-E)  \ar[r]  & \OO_{S}(L) \ar[r]  & 0
}
\]
Taking cohomology and using the fact that $h^1(L-E)=0$ as $L-E$ is big and nef by Lemma \ref{residualSystem}, we obtain that $\cork \mu=h^1(L-2E)$. 

We have $(L-2E) . L=4$, whence $h^2(L-2E)=h^0(2E-L)=0$, as $L$ is ample.
Similarly, $h^0(L-2E)=0$, since $(L-2E) . (L-E)=-1$ and $L-E$ is nef. 
Since $(L-2E)^2=-6$, Riemann--Roch yields $h^1(L-2E)=1$, and \eqref{eq:a5} is proved.

To prove \eqref{eq:a4}, note that $(3L-4E)^2=6$ and $h^2(3L-4E)=h^0(4E-3L)=0$, as $(4E-3L) . E <0$ and $E$ is nef. Hence, \eqref{eq:a4} is equivalent to 
$h^1(3L-4E)=0$. 

To get a contradiction, assume that $h^1(3L-4E)>0$. Then, by \cite{klvan}, there exists an effective divisor $\Delta$ such that $\Delta^2=-2$ and $k:=-\Delta . (3L-4E) \geq 2$. Since $\Delta . L >0$, as $L$ is ample, we must have 
\begin{equation}
  \label{eq:2onE}
  \Delta . E \geq 2.
\end{equation}

One computes $(3L-4E-k\Delta)^2=6$ and
$(3L-4E-k\Delta) . (L-E)=7-k\Delta . (L-E)$. By the Hodge index theorem,
\[ 24 = (3L-4E-k\Delta)^2 \cdot (L-E)^2 \leq \left[7-k\Delta . (L-E) \right] ^2,\]
whence the only possibilities
\begin{itemize}
\item[(I)] $\Delta . (L-E)=0$; or
\item[(II)] $\Delta . (L-E)=1$ and $k=2$.
\end{itemize}

In case (I) we find $(L-E-\Delta)^2=2$ and $(L-E-\Delta) . (L-E)=4$, whence $h^0(L-E-\Delta) \geq 3$ by Riemann--Roch and Serre duality. By \eqref{eq:2onE} we have $(E+\Delta)^2 \geq 2$, whence also $h^0(E+\Delta) \geq 3$ by Riemann--Roch. But then $h^0(L-E-\Delta)h^0(E+\Delta) \geq 9 = 8+1$, contradicting Brill--Noether generality. 

In case (II) we have $\Delta . L = \Delta . E +1$ and $-2=\Delta . (3L-4E)$, which together yield $\Delta . E=5$ and $\Delta . L=6$. Therefore, $(L-E-\Delta)^2=0$ and $(L-E-\Delta) . L=3$, whence
 $h^0(L-E-\Delta) \geq 2$ by Riemann--Roch and Serre duality. Moreover,
$(E+\Delta)^2 =8$, whence $h^0(E+\Delta) \geq 6$ by Riemann--Roch. Similarly, to the previous case, we obtain a contradiction to  Brill--Noether generality. 

This shows that \eqref{eq:a4} holds and finishes the proof of the proposition.
\end{proof}

For $i\in \{0,\dots,9\}$, let $\fN_i$ and $\cF^{\fN_i}$ be as in the previous subsection and  

let $\cP^{\fN_i}$ be the moduli space of triples as in the introduction. Note that $\cP^{\fN_i}$ is birational to the open part of the tautological $\PP^8$-bundle over $\cF^{\fN_i}$ consisting of pairs $(S,C)$ with $[S] \in \cF^{\fN_i}$ and $[C]$ representing a smooth curve in  $|L|$, where $L$ is the generator class of square $14$ in $\fN_i$. We have $\cP^{\fN_{i+1}} \subset \cP^{\fN_i}$ for each $i \in \{0,\ldots,8\}$.

Let $m^{\fN_i}_8:\cP^{\fN_i} \to \cM_8$ be the moduli map. 

\begin{proposition} \label{prop:expfiber}
  For each $i \in \{0,\ldots,9\}$, a general fiber of $m^{\fN_i}_8$ has dimension \linebreak 
$\max \{0,6-i\}$.  
\end{proposition}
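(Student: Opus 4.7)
The plan is to combine Proposition~\ref{prop:6dimfiber} with the parametrisation of elliptic pencils given by Corollary~\ref{pencilsAndDualGrassmannian}. Fix a general $(S,L,C)\in \cP^{\fN_i}$, so $(S,L)$ is Brill--Noether general and contains at least one elliptic pencil of degree~$5$. By Proposition~\ref{prop:6dimfiber} the fiber $F$ of $m_8$ at $[C]$ is smooth and $6$-dimensional at $(S,L,C)$. Since $m^{\fN_i}_8$ is simply the restriction of $m_8$ to $\cP^{\fN_i}\subset \cP_8$, the fiber of $m^{\fN_i}_8$ at $[C]$ coincides with $F\cap \cP^{\fN_i}$. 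As $\cP^{\fN_i}$ has codimension $i$ in $\cP_8$, each irreducible component of this intersection has dimension at least $6-i$; together with the dominance of $m^{\fN_i}_8$ for $i\le 6$ (verified below), this gives the desired lower bound.

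For the upper bound I would use the description from Corollary~\ref{pencilsAndDualGrassmannian}: a triple $(S,L,C)\in \cP^{\fN_i}$ corresponds, modulo $GL(V_6)$, to a flag $\PP^7\subset \PP^8\subset \PP^{14}=\PP^*(\bigwedge^2V_6)$ with $C=\PP^7\cap G(V_6,2)$ and $S=\PP^8\cap G(V_6,2)$, such that the dual hyperplane $\PP^5=(\PP^8)^\perp\subset \PP^6:=(\PP^7)^\perp$ contains at least $i$ points of $G(2,V_6)$. By the geometric Riemann--Roch and residuation, $G(2,V_6)\cap \PP^6$ is identified with $W^3_9(C)\cong W^1_5(C)$, which for a Brill--Noether general curve $C$ consists of exactly $14$ points. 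Thus, after finitely many choices (of a $g^1_5$ realising an embedding $C\hookrightarrow G(V_6,2)$ and of an $i$-subset $T$ of these $14$ Schubert points), the fiber $(m^{\fN_i}_8)^{-1}([C])$ becomes the linear system of hyperplanes $\PP^5\subset \PP^6$ passing through $T$. Dominance of $m^{\fN_i}_8$ for $i\le 6$ is then immediate, as any $i\le 6$ points in $\PP^6$ automatically lie on at least one hyperplane.

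The main obstacle is to show that, for general $[C]$ in the image of $m^{\fN_i}_8$, these $14$ Schubert points are in sufficiently general linear position in $\PP^6$, so that any $i$-subset spans a $\PP^{\min(i-1,6)}$. The hyperplanes through an $i$-subset will then form a $\PP^{\max\{0,6-i\}}$, exactly the claimed fiber dimension. I would establish this by identifying the relevant failure loci in $\cM_8$ with proper closed subsets; for instance, the locus where some $7$ of the $14$ points lie on a hyperplane is contained in the image of $m^{\fN_7}_8$, whose dimension is at most $\dim \cP^{\fN_7}=20<21=\dim \cM_8$, and analogous arguments handle the cases $i=8,9$. For $[C]$ outside these failure loci, the analysis of the preceding paragraph produces a fiber of dimension exactly $\max\{0,6-i\}$, completing the proof.
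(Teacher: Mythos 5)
Your lower bound and your identification of the fiber of $m_8$ over a Brill--Noether general $[C]$ with the $\PP^6$ of hyperplanes $\PP^5\subset\PP^6_{(C)}$ (via the uniqueness of the Mukai bundle on $C$ and Corollary \ref{pencilsAndDualGrassmannian}) are sound, and your route is genuinely different from the paper's. The paper does not work over a general curve at all: it fixes the special curve $C$ lying on an $\fN_9$-polarized surface $S$, uses Proposition \ref{prop:6dimfiber} to see that the fiber of $m_8$ is exactly $6$-dimensional there, and builds a strictly decreasing chain of irreducible fiber components $F_5\subsetneq\cdots\subsetneq F_0$ by choosing hyperplanes of $\PP^6_{(C)}$ through precisely $i\le 5$ of the nine points of $\PP^5_{(S)}\cap G(2,V_6)$; the strict inclusions force $\dim F_i\le 6-i$, and the codimension of $\cP^{\fN_i}$ gives the reverse inequality.

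The gap in your argument is the general-position claim for the $14$ points of $W^1_5(C)$ in $\PP^6_{(C)}$, which carries your entire upper bound. The fiber of $m^{\fN_i}_8$ is the union over all $i$-subsets $T$ of the linear systems of hyperplanes through $T$, hence has dimension $5-\min_T\dim\langle T\rangle$; to obtain $6-i$ you need \emph{every} $i$-subset to span a $\PP^{i-1}$. Your failure-locus argument only treats the degeneracy ``seven points on a hyperplane'' (where containment in the image of $m^{\fN_7}_8$, of dimension at most $20$, does give a proper closed subset of $\cM_8$). It says nothing about, say, four of the points spanning only a plane or six spanning only a $\PP^4$, which would inflate the fibers of $m^{\fN_4}_8$ and $m^{\fN_6}_8$ and are not contained in the image of any low-dimensional $\cP^{\fN_j}$ in any obvious way. (Three collinear points are excluded because $G(2,V_6)$ is cut out by quadrics and would then contain the line, but no such cheap argument handles the higher degeneracies.) Ruling out these loci by semicontinuity requires exhibiting at least one curve in the relevant image over which the expected dimension is attained --- which is precisely what the paper's explicit construction on the $\fN_9$-surface supplies. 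The same issue recurs for $i\ge 7$: there a general $[C]\in\cM_8$ is of no use (it is not in the image), and over a general point of the image you must still exclude that the $\ge 7$ special points span less than a $\PP^5$; ``analogous arguments'' does not provide the needed witness, whereas the $\fN_9$-curve, over which the unique such hyperplane is $\PP^5_{(S)}$ itself, does.
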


\begin{proof}
By Proposition \ref{prop:6dimfiber}, the fiber of $m^{\fN_0}_8$ is smooth and $6$-dimensional at any point $(S,C) \in \cP^{\fN_9}$. Fix such an $(S,C)$.

We will show that there exists a chain of irreducible components $F_i\subset (m_8^{\fN_i})^{-1}([C])$ of the fiber of $m_8^{\fN_i}$ for $i\in \{0,\dots, 5\}$, respectively, containing $(S,C)\in \cP^{\fN_9}$ such that 
$$
(S,C) \in F_5 \subsetneq F_4 \subsetneq \dots \subsetneq F_1 \subsetneq F_0.
$$
Consequently, there exist $K3$ surfaces $S_i\in \cF^{\fN_i}\backslash \cF^{\fN_{i+1}}$ for $i\in \{0,\dots, 5\}$ containing $C$. 
Since $\dim F_0 = 6$ by Proposition \ref{prop:6dimfiber}, the dimension of $F_i$ is $6-i$ for $i\in \{0,\dots, 5\}$ and the proposition will follow.

By construction, $S$ (resp. $C$) is the intersection of $G(V_6,2)$ with a $\PP^8$ (respectively a $\PP^7$) in $\PP^{14}$. The dual $\PP^5$ of the $\PP^8$, which we henceforth call $\PP^5_{(S)}$, intersects the dual $G(2,V_6)$ in $9$ points, call them $x_1,\ldots,x_9$,   
and the dual $\PP^6$ of the $\PP^7$, which we henceforth call $\PP^6_{(C)}$, contains 
$\PP^5_{(S)}$. 
 
By construction, the nine points $x_1,\ldots, x_9$ span $\PP^5_{(S)}$. Thus, we may find inside $\PP^6_{(C)}$ a set of six additional hyperplanes 
$\PP^5_{(i)}$, $i \in \{0,\ldots,5\}$ containing precisely $i$ of the points
$x_1,\ldots,x_9$; in particular $\PP^5_{(i)}$ intersects $G(2,V_6)$ in precisely $i$ points.

Denote by $\PP^8_{(i)}$ the dual $\PP^8$ of $\PP^5_{(i)}$. Then $\PP^8_{(i)} \cap G(V_6,2)$ is a $K3$ surface $S_i$ containing $C$ and precisely $i$ elliptic pencils of degree $5$ (and mutually intersecting in $2$ points)  by Corollary \ref{pencilsAndDualGrassmannian}.  As the nine elliptic pencils together with $C$ generate $\fN_9\subset \Pic(S)$, we also have that $C$ and the  
$i$ elliptic pencils generate $\fN_i\subset \Pic(S_i)$, whence 
$S_i \in \cF^{\fN_{i}}\backslash\cF^{\fN_{i+1}}$. Each pair $(S_i,C)$ therefore lies in $F_i \backslash F_{i+1}$.  This concludes the proof.
\end{proof}

\begin{corollary}\label{dybala8}
 For each $i \in \{0,\ldots,9\}$, the codimension of the image of the moduli map $m^{\fN_i}_8$ is $\max \{0,i-6\}$. In particular, a general curve of genus $8$ is a linear section of a $K3$ surface such that precisely six out of its $14$ $g^1_5$s are induced by elliptic pencils on the $K3$ surface. Moreover, there is a codimension $k$ family of curves lying on a $K3$ surface such that precisely $6+k$ of its $g^1_5$s are induced by elliptic pencils on the $K3$ surface for $k\in \{1,2,3\}$. 
\end{corollary}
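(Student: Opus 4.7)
The plan is to deduce the statement from Proposition \ref{prop:expfiber} by a dimension count, combined with the classical enumeration of pencils of degree $5$ on a general curve of genus $8$.

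First I would compute $\dim \cF^{\fN_i} = 20 - \rk \fN_i = 19 - i$, so that $\dim \cP^{\fN_i} = 27-i$, since $\cP^{\fN_i}$ is birational to a $\PP^8$-bundle over $\cF^{\fN_i}$. Combining with $\dim \cM_8 = 21$ and the general fiber dimension $\max\{0, 6-i\}$ from Proposition \ref{prop:expfiber}, the image of $m^{\fN_i}_8$ has dimension
\[
(27-i) - \max\{0, 6-i\} = \min\{21, 27-i\},
\]
yielding codimension $\max\{0, i-6\}$ in $\cM_8$. This establishes the first sentence.

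For the second assertion I would invoke the classical enumeration giving exactly $\mu(8;1,5) = 14$ pencils of degree $5$ on a general genus $8$ curve (see \cite[Ch. V]{ACGH}, since $\rho(8,1,5)=0$). With $i = 6$ the map $m^{\fN_6}_8$ is dominant, while for $i = 7$ its image has codimension $1$, so a general $[C] \in \cM_8$ is a linear section of some $(S,L) \in \cF^{\fN_6}$ but is not a section of any $\fN_7$-polarized $K3$. By Lemma \ref{cr8}(c) and Corollary \ref{pencilsAndDualGrassmannian}, the elliptic pencils of degree $5$ on such an $S$ correspond bijectively to the intersection points $G(2, V_6) \cap \PP^5_{(S)}$, and each one induces a $g^1_5$ on $C$; hence precisely six of the fourteen pencils on $C$ extend to elliptic pencils on $S$.

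For $k \in \{1, 2, 3\}$, the same dimension count shows that $m^{\fN_{6+k}}_8$ has codimension $k$ image in $\cM_8$. Using the birational description of $\cF^{\fN_{6+k}}$ via $(6+k)$-secant $5$-planes to $G(2, V_6)$ (Theorems \ref{thm:uniN9} and \ref{thm:uniN6}), the sublocus of planes meeting $G(2, V_6)$ in strictly more than $6+k$ points is a proper closed subset (vacuous for $k=3$, since $9$ is the maximum, by the discussion preceding Lemma \ref{cr8}). Hence a general curve in this codimension-$k$ image lies on a $K3$ surface with exactly $6+k$ elliptic pencils of degree $5$. The main subtlety is this last upgrade from ``$\fN_{6+k}$-polarized'' to ``exactly $6+k$ pencils'', which I would handle via Corollary \ref{pencilsAndDualGrassmannian} together with the secant-plane parametrization, since a priori the Picard lattice of a general member of $\cF^{\fN_{6+k}}$ could be larger than $\fN_{6+k}$ without being $\fN_{6+k+1}$-polarized.
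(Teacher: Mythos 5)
Your proposal is correct and follows essentially the paper's (implicit) argument: the paper states this as an immediate corollary of Proposition \ref{prop:expfiber}, obtained by exactly the dimension count $\dim\cP^{\fN_i}=27-i$ minus the general fiber dimension, together with Corollary \ref{pencilsAndDualGrassmannian} and the secant-plane description to pin down the exact number of pencils. Your explicit attention to upgrading ``$\fN_{6+k}$-polarized'' to ``exactly $6+k$ pencils'' is a worthwhile addition (note only that the fact you attribute to Lemma \ref{cr8}(c) for $\fN_{6+k}$ is really the statement, established in the discussion around Theorem \ref{thm:uniN6}, that a general member of $\cF^{\fN_i}$ corresponds to a $5$-plane meeting $G(2,V_6)$ in exactly $i$ points).
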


\begin{remark}
 One can ask similar questions for $K3$ surfaces of higher even genus. For instance, how many elliptic pencils of minimal degree  exist  on a Brill--Noether general $K3$ surface? But the methods in this article cannot be applied to $K3$ surfaces of higher genus. Indeed, let $C$ be a Brill--Noether general curve of even genus $g\ge 10$. Note on the one hand that the curve $C$ does not lie on a $K3$ surface and on the other hand that the (finite) number of pencils of minimal degree on $C$ is bigger that $19$ (the maximal rank of the Picard lattice of a smooth $K3$ surface). Furthermore, a characterization of Brill--Noether general $K3$ surfaces is only known for $g\le 10$ and $12$.
\end{remark}

\end{document}